\numberwithin{equation}{section}
\definecolor{Mygrey}{gray}{0.75}
\def\@tocline#1#2#3#4#5#6#7{\relax
  \ifnum #1>\c@tocdepth 
  \else
    \par \addpenalty\@secpenalty\addvspace{#2}%
    \begingroup \hyphenpenalty\@M
    \@ifempty{#4}{%
      \@tempdima\csname r@tocindent\number#1\endcsname\relax
    }{%
      \@tempdima#4\relax
    }%
    \parindent\z@ \leftskip#3\relax \advance\leftskip\@tempdima\relax
    \rightskip\@pnumwidth plus4em \parfillskip-\@pnumwidth
    #5\leavevmode\hskip-\@tempdima
      \ifcase #1
       \or\or \hskip 1em \or \hskip 2em \else \hskip 3em \fi%
      #6\nobreak\relax
    \dotfill\hbox to\@pnumwidth{\@tocpagenum{#7}}\par
    \nobreak
    \endgroup
  \fi}
\def\bfa{{\bf a}}
\def\displayandname#1{\rlap{$\displaystyle\csname #1\endcsname$}%
                      \qquad \texttt{\char92 #1}}
\def\url@leostyle{%
  \@ifundefined{selectfont}{\def\UrlFont{\sf}}{\def\UrlFont{\small\ttfamily}}}
\newtheorem{thm}{Theorem}[section]
\newtheorem{pro}[thm]{Proposition}
\newtheorem{lem}[thm]{Lemma}
\newtheorem{con}[thm]{Conjecture}
\theoremstyle{definition}
\newtheorem{df}[thm]{Definition}
\theoremstyle{remark}
\newtheorem{rem}[thm]{Remark}
\newtheorem{exa}[thm]{Example}
\newtheorem{nota}[thm]{Notation}
\newtheorem*{example*}{Example}
\newcommand\M{{\operatorname{M}}}
\newcommand\val{{\operatorname{val}}}
\newcommand\trop{{\operatorname{trop}}}
\newcommand\Ind{{\operatorname{Ind}}}
\newcommand\adj{\operatorname{adj}}
\newcommand\tr{{\operatorname{tr}}}
\newcommand\Id{{\operatorname{Id}}}
\begin{document}

\bibliographystyle{acm}

\title{Dependence of supertropical eigenspaces}

 \author[ Adi Niv]{Adi Niv}
\address{INRIA Saclay--\^Ile-de-France and CMAP. \'Ecole Polytechnique, Route de Saclay,~91128 ${\ \ }$ Palaiseau Cedex, France.}
\email{Adi.Niv@inria.fr}

\author[L. Rowen]{Louis Rowen}
\address{Department of Mathematics. Bar-Ilan University, Ramat-Gan 52900,Israel}
\email{rowen@macs.biu.ac.il}

  \thanks{The first author is sported by the French Chateaubriand grant and  INRIA postdoctoral fellowship}

\begin{abstract}
We  study the pathology that  causes  tropical eigenspaces of
distinct supertropical eigenvalues of a nonsingular matrix $A$, to
be dependent. We show that in lower dimensions the eigenvectors of
distinct eigenvalues are independent, as desired. The index set that
differentiates between  subsequent essential monomials of the
characteristic polynomial,  yields an eigenvalue $\lambda$, and
corresponds to the columns of the eigenmatrix $A+\lambda I$ from
which the eigenvectors are taken. We ascertain the cause for failure
in higher dimensions, and prove that  independence of the
eigenvectors is recovered in case a certain ``difference criterion''
holds, defined in terms of disjoint differences between index sets
of subsequent coefficients. We conclude by considering the
eigenvectors of the matrix $A^\nabla : = \frac 1{\det(A)}{\adj(A)}$ and
the connection of the independence question to generalized
eigenvectors.

\end{abstract}

\thispagestyle{myheadings}
\font\rms=cmr8
\font\its=cmti8
\font\bfs=cmbx8

\markright{}
\def\thepage{}
\maketitle

\section{Introduction}

Although supertropical matrix algebra as developed in
\cite{STMA,STMA2} follows the general lines of classical linear
algebra (i.e., a Cayley-Hamilton Theorem, correspondence between the
roots of the characteristic polynomial and eigenvalues, Kramer's
rule, etc.), one encounters the anomaly in~\cite[Remark 5.3 and
Theorem 5.6]{STMA2} of a matrix   whose supertropical
 eigenvalues are distinct but whose corresponding supertropical eigenspaces are
 dependent.
 In this paper we examine how this happens, and give a criterion for
 the supertropical eigenspaces to be
 dependent, which we call the \textbf{difference criterion}, cf.~Definition~\ref{difcrit} and Theorem~\ref{es-n}.
 A pathological  example (\ref{exa}) is studied in depth to show why
 the difference criterion is critical.
We resolve the difficulty in general in Theorem~\ref{es-n1} by
passing to powers of $A$ and  considering generalized supertropical
eigenspaces.

\subsection{The tropical algebra and related structures}\label{sec:pre}

We start by  discussing briefly the max-plus algebra, its
refinements, and their relevance to applications.

The use of the max-plus algebra in tropical mathematics was inspired
by the function~$\log_t$, as the base $t$ of the logarithm
approaches~$0$. In the literature, this structure is usually studied
via valuations (see ~\cite{ST&SV} and ~\cite{STVT}) over the
field~$K=\mathbb{C}\{\{t\}\}$ of Puiseux series with powers
in~$\mathbb{Q}$ (resp.~$\mathbb{R}$, to the ordered
group~$(\mathbb{Q},+,\geq )$ (resp.~$(\mathbb{R},+,\geq )$). The
valuation is given by the lowest exponent appearing nontrivially in
the series (indeed~$v(ab)=v(a)+v(b)$ and~$v(a+b)\geq
\min(v(a),v(b))$). Then, we look at the dual structure obtained by
defining~$\trop(a)=-\val(a)$ and denoted as the tropicalization
of~$a\in K$. By setting~$\trop(a+b)$ to
be~$\max\{\trop(a),\trop(b)\}$, it is obvious that the tropical
structure deals with the uncertainty of equality in the valuation,
in the form of~$\trop(a+a)=\trop(a)$ (also equals to $\trop(-a)$).

\vskip 0.5 truecm

\subsection{The max-plus  algebra}

\vskip 0.1 truecm

The \textbf{tropical max-plus semifield} is an ordered group
$\mathcal{T}$ (usually the additive group of real
numbers~$\mathbb{R}$ or the set of rational numbers~$\mathbb{Q}$),
together with a formal element~$-\infty$ adjoined. The ordered group
$\mathcal{T}$ is made into a semiring equipped with the operations
$$a\varoplus b=\max\{a,b\}\ \text{ and }\ a\varodot b=a+b,$$ denoted
here as~$a+b$ and~$ab$ respectively (see ~\cite{MPA}, ~\cite{TAG}
and ~\cite{TA}). The unit element $1_{\mathcal{T}}$ is really the
element~$0\in \mathbb{Q}$, and $-\infty$ serves as the zero element.
\vskip 0.35 truecm

 Tropicalization enables one to simplify non-linear questions by putting them into a linear setting (see~\cite{PA}),
 which can be applied to discrete mathematics (see ~\cite{DMP}), optimization (see ~\cite{LOP}) and algebraic geometry (see ~\cite{TAG}).

In ~\cite{TSPM} Gaubert and Sharify introduce a general scaling technique, based on tropical algebra, which applies in particular to the companion form,  determining  the eigenvalues of a matrix polynomial. Akian, Gaubert  and Guterman show in ~\cite{polyhedra} that several decision problems originating from max-plus or tropical convexity are equivalent to zero-sum two player game problems.

\cite{T&I} is a collection of papers put together by Litvinov and
Sergeev. One main theme is the Maslov dequantization applied to
traditional mathematics over fields, built on the foundations of
idempotent analysis, tropical algebra, and tropical geometry.
Applications of idempotent mathematics were introduced by Litvinov
and Maslov in ~\cite{IM}.

On the side of pure mathematics, contributions are made in
~\cite{T&I} to idempotent analysis, tropical algebras, tropical
linear algebra and tropical convex geometry. Elaborate geometric
background with applications to  problems in classical (real and
complex) geometry can be found in ~\cite{TGA}. Here Mikhalkin viewed
the tropical structure as a branch of geometry manipulating with
certain piecewise-linear objects that take over the role of
classical algebraic varieties and describes hypersurfaces,
varieties, morphisms and moduli spaces in this setting.

Extensive mathematical applications have been made in combinatorics.
In this max-plus language, we may  use notions of linear algebra to
interpret combinatorial problems. In ~\cite{APP} Jonczy presents
some problems described by the Path algebra and solved by means of
$\min$ and $\max$ operations. Combinatorial overviews are given in
~\cite{CCP}, ~\cite{MA} of Butkovic  and ~\cite{ETCP} of Butkovic
and Murfitt, which focus on presenting a number of links between
basic max-algebraic problems  on the one hand and combinatorial
problems on the other hand. This indicates that the max-algebra may
be regarded as a linear-algebraic encoding of a class of
combinatorial problems.

\pagenumbering{arabic}
\addtocounter{page}{1}
\markboth{\SMALL ADI NIV AND LOUIS ROWEN}{\SMALL DEPENDENCE OF SUPERTROPICAL EIGENSPACES}

\vskip 0.45 truecm

\subsection{Supertropical algebra}

We pass to  the \textbf{supertropical semiring}, equipped with the
ghost ideal~$\mathcal{G}: = \mathcal{T}^{\nu}$, as established and
studied by~Izhakian and Rowen in ~\cite{STLA} and ~\cite{STA}.

We denote  as~$R=\mathcal{T}\cup \mathcal{G}\cup \{-\infty \}$ the
``standard''  supertropical semiring,  which contains the so-called
tangible elements of the structure and where we have a projection $R
\to \mathcal G$ given by $a \mapsto a^\nu$ for $a \in \mathcal T$
(and which is the identity map on $\mathcal G$). $\{a^{\nu}\in
\mathcal{G},\forall a\in \mathcal{T}\}$ are the ghost elements of
the structure, as defined in~\cite{STA}.
We write $0_R$
for $-\infty $, to stress its role as the zero element. On the one
hand,  $\mathcal{G}$ is a copy of the max-plus semifield, so $R$ can
be viewed as a cover of the max-plus semifield.

The supertropical semiring enables us to distinguish between a
maximal element~$a$ that is attained only once in a sum, i.e.,~$a\in
\mathcal{T}$ which is invertible, and a maximum that is being
attained at least twice, i.e.,~$a+a=a^{\nu}\in \mathcal{G}$, which
is not invertible. We do not distinguish between~$a+a$ and $a+a+a$
in this  structure. Note that $\nu$ projects the standard
supertropical semiring onto~$\mathcal{G}$, which can be identified
with the usual tropical structure. \vskip 0.35 truecm

%
%

In this new supertropical sense, we use the following order relation to describe two elements that are equal up to a ghost supplement:

\begin{df}\label{gs}
Let $a,b$ be any two elements in $R$. We say that $a$ \textbf{ghost
surpasses}~$b$, denoted $a\models_{gs} b$, if~$a=b+ghost$. That is,
$a=b\ $ or $a\in \mathcal{G}$ with $a^{\nu}\geq b^{\nu}$.

We say $a$ is~$\nu$-equivalent to~$b$, denoted by~$a\cong_{\nu}b$, if~$a^{\nu}=b^{\nu}$. That is, in the tropical structure,~$\nu$-equivalence projects to equality.
\vskip 0.15 truecm

\end{df}

    \noindent\textbf{Important properties of $\models_{gs}$:}
\begin{enumerate}
\item  $\models_{gs}$ is a partial order relation (see ~\cite[Lemma 1.5]{STMA2}).
\item If $a\models_{gs} b$ then $ac\models_{gs} bc$.
\item  If $a\models_{gs} b$ and $c\models_{gs} d$ then $a+c\models_{gs} b+d$ and $ac\models_{gs} bd$.
\item If $a\models_{gs} b$ and $a\in \mathcal{T}$, then $a=b$.
\end{enumerate}

Considering this relation, we regain basic algebraic properties that
were not accessible in the usual tropical setting, such as
multiplicativity of the tropical determinant, the  near
multiplicativity of the tropical adjoint, the role of roots in the
factorization of polynomials, the role of the determinant in matrix
singularity, a matrix that acts like an inverse,  common behavior of
similar matrices, classical properties of $ \adj(A),$ and the use of
elementary matrices. Tropical eigenspaces and their dependences are
of considerable interest, as one can see in  ~\cite{LDTS},
~\cite{TB}, ~\cite{CCP}, ~\cite{STLA}, ~\cite{STMA2} and
~\cite{MPDM}.

Many of these properties will be formulated in the Preliminaries
section. We would also like to attain a supertropical analog to the
classical eigenspace decomposition, (i.e., eigenvectors
corresponding to distinct eigenvalues are linearly independent, and
the generalized eigenvectors generate~$R^n$), but we encounter the
example of~\cite[Example~5.7]{STMA2} where the eigenvectors of
distinct eigenvalues are supertropically dependent, extensively
studied in Section~\ref{TPA}. Our objective in this paper is to
understand how such an example arises, and how it can be
circumvented, either by introducing the \textbf{difference
criterion} of Definition~\ref{difcrit} or by passing to generalized
eigenspaces in \S~\ref{geneig}.

\section{Preliminaries}\label{sec:pre}

In this section, we present well-known and recent results of
tropical polynomials. Then we introduce properties of matrices and
vectors in the tropical structure, with  definitions extended to the
supertropical framework.

\subsection{Tropical Polynomials}\label{sec:pre}

\begin{nota}\label{nuhat}
$ $

Throughout, for each element~$a\in R$, we choose an
element~$\hat{a}\in \mathcal{T}\text{ such that } \hat{a} \cong_\nu
a.$ (We define $0_R ^ \nu =  0_R ,$ so $ \widehat{0_R ^ \nu} =
0_R.$)

\noindent Likewise, for $\bfa = (a_1, \dots, a_n)$, $\bfa ^\nu$
denotes $(a_1^\nu, \dots, a_n^\nu)$ and  $\widehat{\bfa} $ denotes
$(\widehat{a_1}, \dots, \widehat{a_n})$. The same holds for matrices
and for polynomials (according to their coefficients).\end{nota}

\vskip 0.15 truecm


\begin{df} Let $k\in \mathbb{N}$. Defining $b=a^k$ to be the tropical product of $a$ by itself $k$ times (i.e., $a^{\varodot k}=a\varodot \cdots \varodot a=a+\cdots +a=ka$), we may consider that  $a$ is a~$k$-\textbf{root} of~$b$,  denoted as~$a= \sqrt[k]{b}$. This operation is well-defined on $\mathcal{T}$.
\end{df}

Clearly, any tropical polynomial  takes the value of the dominant
monomial along the~$\mathcal{T}$-axis. That having been said, it is
possible that some monomials in the polynomial would not dominate
for any~$x\in \mathcal{T}$.

\begin{df}
Let $f(x)=\sum_{i=0}^n \alpha_ix^{n-i}\in R[x]$ be a tropical polynomial. We
call monomials in $f(x)$ that dominate for some~$x\in R$
\textbf{essential}, and monomials in $f(x)$ that do not dominate for
any $x\in R$ \textbf{inessential}. We write $f^{es}(x)=   \sum_{k\in
I} \alpha_kx^{n-k}\in R[x],$ where $\alpha_kx^{n-k}$ is an essential monomial $
\forall k\in I$, called \textbf{the essential polynomial} of $f$.
 \end{df}

In the classical  sense, a  root of a tropical polynomial   can only be $0_R$, which occurs if and only if the polynomial has  constant
term $0_R$. We would like the roots to indicate the factorization
of the polynomial, which leads to the
following tropical definition of a root.

\begin{df}\label{root}  We define an element $r \in R$ to be a \textbf{root} of a
tropical polynomial $f(x)$ if~$f(r)\models_{gs} 0_R,$ i.e., $f(r)$
is a ghost.
\end{df}

 We refer to roots of a polynomial being obtained as a simultaneous value of two leading tangible monomials as \textbf{corner roots}, and to  roots that are being obtained from one leading  ghost monomial as \textbf{non-corner roots}.
 We factor polynomials viewing them as functions. Then, for every corner root $r$ of $f$, we may write $f$ as $(x+r)^k g(x)$  for some $g(x)\in R[x]$ and $k\in \mathbb{N}$, where $k$ is the difference between the exponents of the tangible essential monomials attaining $r$.
\vskip 0.15 truecm


\vskip 0.2 truecm

\subsection{Matrices}

\noindent As defined over a ring, for matrices~$A=(a_{i,j})\in M_{n\times m}(R),\
B=(b_{i,j})\in M_{s\times t}(R)$
$$\begin{cases}
A+B=(c_{i,j}) :\ c_{i,j}=a_{i,j}+b_{i,j},&  \text{defined iff }n=s,m=t\ ,\\
\\
\ \ AB=(d_{i,j})\ :\ d_{i,j}=\sum_{k\in[n]}a_{i,k}b_{k,j},&
\text{defined iff }\ m=s\ .
\end{cases}$$

\begin{df}\label{pt} Let $\pi\in S_n$ and $A=(a_{i,j})\in M_n(R)$. The \textbf{permutation} $\mathbf{\pi}$ \textbf{of} $\mathbf{A}$ is the word $$a_{1,\pi(1)}a_{2,\pi(2)}\cdots a_{n,\pi(n)}.$$ The word $a_{1,1}a_{2,2}\cdots a_{n,n}$ is denoted as the \textbf{identity or Id-permutation}, corresponding to the diagonal of $A$.
 We write  a permutation  of $A$ as a product of disjoint cycles~$C_1,\dots,C_t$, where~$\{C_i\}$ corresponds to the disjoint cycles composing $\pi$.

We define the tropical \textbf{trace} and   \textbf{determinant} of
$A$  to be $$\tr(A)=\sum_{k\in[n]} a_{k,k}\ \ \ \text{and}\ \ \
\det(A)=\sum_{\sigma \in S_n}a_{1,\sigma(1)}\cdots
a_{n,\sigma(n)},$$ respectively.

\noindent In the special case where $A\in M_n(R)$, we refer to any
entry attaining the trace as \textbf{a dominant diagonal entry}. We
call $ a_{1,\sigma(1)}\cdots a_{n,\sigma(n)}$ the \textbf{weight}
contributed by $\sigma$ to the determinant, and any permutation
whose weight has the same $\nu$-value as the determinant is
\textbf{a dominant permutation of $\mathbf{A}$}.
\end{df}

If there is a single dominant permutation, its weight equals the
determinant. \vskip 0.1 truecm

Unlike over a field, the tropical concepts of singularity, invertability and factorizability do not coincide.
We would like the determinant to indicate the singularity of a matrix. Hence, we define a matrix~
$A\in \M_n(R)$  to be  \textbf{tropically singular} if there exist
at least two different dominant permutations. Otherwise the matrix
is \textbf{tropically nonsingular}. Consequently,  a matrix~$A\in
\M_n(R)$ is supertropically singular if~$\det(A)\models_{gs} 0_R$
and supertropically nonsingular if~$\det(A) \in \mathcal T$. A
matrix~$A$ is \textbf{strictly singular} if~$\det(A)=0_R$.

 A surprising result in this context is that the product of two nonsingular matrices might be singular, but we do have:

\begin{thm} \label{det}
For $n\times n$ matrices $A,B$ over the supertropical  semiring $R$,
we have $$\det(AB) \models_{gs} \det (A)\det (B).$$
\end{thm}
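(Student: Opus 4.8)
The plan is to prove the inequality $\det(AB) \models_{gs} \det(A)\det(B)$ by a direct combinatorial/expansion argument in the supertropical semiring, exploiting the fact that the $\models_{gs}$ relation is compatible with addition and multiplication (properties (2) and (3) of $\models_{gs}$ listed above). First I would write out $\det(AB)$ by its definition as a sum over permutations $\sigma \in S_n$ of the weights $(AB)_{1,\sigma(1)}\cdots(AB)_{n,\sigma(n)}$, then expand each entry $(AB)_{i,j} = \sum_{k} a_{i,k}b_{k,j}$. Distributing, $\det(AB)$ becomes a sum, over all $\sigma \in S_n$ and all functions $\varphi\colon [n]\to[n]$, of the terms $\prod_{i=1}^n a_{i,\varphi(i)} b_{\varphi(i),\sigma(i)}$. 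On the other side, $\det(A)\det(B) = \bigl(\sum_{\tau} \prod_i a_{i,\tau(i)}\bigr)\bigl(\sum_{\rho}\prod_j b_{j,\rho(j)}\bigr) = \sum_{\tau,\rho}\prod_i a_{i,\tau(i)} b_{\tau(i),\rho(\tau(i))}$, which is precisely the subsum of the expansion of $\det(AB)$ obtained by restricting $\varphi$ to range only over permutations $\tau \in S_n$ (with $\sigma = \rho\circ\tau$, a bijection of the summation index).

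The key step is then to observe that $\det(AB)$ is a sum with strictly more terms than (a reindexing of) $\det(A)\det(B)$: every term appearing in the expansion of $\det(A)\det(B)$ also appears in the expansion of $\det(AB)$, namely those with $\varphi$ a permutation. Since in the supertropical semiring adding extra (nonnegative) summands can only ghost-surpass the original sum — more precisely, for any $x$ and $y$ we have $x + y \models_{gs} x$, because either $x+y = x$ or $x+y \in \mathcal{G}$ with $(x+y)^\nu \geq x^\nu$ — it follows that $\det(AB) \models_{gs} \det(A)\det(B)$. I would state the elementary fact $x+y\models_{gs} x$ as a small lemma or inline observation (it is immediate from Definition~\ref{gs}: if $y^\nu < x^\nu$ then $x+y=x$; if $y^\nu \geq x^\nu$ then $x+y$ is tangible only if it equals $x$, and otherwise lies in $\mathcal{G}$ dominating $x$), and then apply it once with $x = \det(A)\det(B)$ and $y$ the sum of all the "extra" terms in $\det(AB)$ where $\varphi$ is not a permutation.

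The one point requiring care — and the place I expect the main bookkeeping obstacle — is verifying that the reindexing $(\tau,\rho) \mapsto (\varphi,\sigma) = (\tau,\rho\circ\tau)$ is a bijection between $S_n\times S_n$ and the subset of pairs $\{(\varphi,\sigma): \varphi\in S_n,\ \sigma\in S_n\}$ of the full index set, and that under it the monomials genuinely match: $\prod_i a_{i,\tau(i)}b_{\tau(i),\rho(\tau(i))} = \prod_i a_{i,\varphi(i)}b_{\varphi(i),\sigma(i)}$. This is routine once one notes that $i\mapsto \tau(i)$ is a bijection so the product $\prod_i b_{\tau(i),\rho(\tau(i))}$ can be reindexed as $\prod_j b_{j,\rho(j)}$, and that $\sigma = \rho\circ\tau$ determines $\rho = \sigma\circ\tau^{-1}$ uniquely. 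No genuinely hard analysis is involved; the proof is entirely a matter of organizing the expansion and invoking monotonicity of $\models_{gs}$ under addition.
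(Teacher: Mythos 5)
Your overall strategy---expand $\det(AB)$ over pairs $(\sigma,\varphi)$ with $\sigma\in S_n$, $\varphi\colon[n]\to[n]$, identify $\det(A)\det(B)$ as the subsum where $\varphi$ is a permutation, and conclude by monotonicity---is the natural combinatorial route, and the bookkeeping of the bijection $(\tau,\rho)\mapsto(\tau,\rho\circ\tau)$ is correct. The paper itself does not give a proof but cites two: a graph-theoretic argument from \cite{STMA} and a transfer-principle argument from \cite{G.Ths}, so your expansion approach would be a third, more elementary route.

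However, there is a genuine gap: the inline lemma ``$x+y\models_{gs}x$ for all $x,y$'' is \emph{false}. Take $x,y\in\mathcal T$ with $y^\nu>x^\nu$; then $x+y=y$ is tangible and $y\ne x$, so $x+y\not\models_{gs}x$. (Your case analysis goes wrong in the step ``if $y^\nu\geq x^\nu$ then $x+y$ is tangible only if it equals $x$''---when $y$ is tangible and strictly dominant, $x+y=y$ is tangible but unequal to $x$.) The relation $\models_{gs}$ is far stronger than the $\nu$-order: adding a \emph{tangible} dominating summand destroys it, whereas adding a \emph{ghost} summand does not. So you cannot simply invoke ``more summands means $\models_{gs}$.''

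What is actually true, and what rescues the proof, is that the extra terms (those with $\varphi$ non-injective) sum to a ghost, so that $\det(AB)=\det(A)\det(B)+(\text{ghost})$, which \emph{is} the definition of $\models_{gs}$. This requires the pairing argument underlying the classical Cauchy--Binet cancellation: if $\varphi$ is not injective, choose (say, lexicographically first) $i_1<i_2$ with $\varphi(i_1)=\varphi(i_2)$, and pair $\sigma$ with $\sigma'=\sigma\circ(i_1\,i_2)$. Then
\[
\prod_i a_{i,\varphi(i)}\,b_{\varphi(i),\sigma(i)} \;=\; \prod_i a_{i,\varphi(i)}\,b_{\varphi(i),\sigma'(i)},
\]
since the two factors for $i=i_1,i_2$ merely swap the second $b$-indices through the common value $\varphi(i_1)=\varphi(i_2)$. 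The involution $\sigma\mapsto\sigma'$ is fixed-point-free, so every monomial with non-injective $\varphi$ occurs at least twice, and a repeated tropical sum is a ghost. With this pairing supplied, your proof becomes correct. Without it, the argument only establishes the $\nu$-inequality $\det(AB)\geq_\nu\det(A)\det(B)$, not ghost-surpassing.
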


This theorem has been proved  in ~\cite[ Theorem 3.5]{STMA} due to considerations of graph theory, but also in ~\cite[Proposition 2.1.7]{G.Ths} by using the  transfer principles (see ~\cite[Theorem 3.3 and Theorem
3.4 ]{LDTS}). These theorems allow one to obtain such results automatically in a wider class of semirings, including the supertropical semiring.

\vskip 0.3 truecm

\begin{df}
Suppose $\mathcal{R}$ is a semiring. An \textbf{$ \mathcal{R}$-module} $V$ is a semigroup $(V,+, 0_V)$ together with
scalar multiplication $\mathcal{R} \times V \rightarrow V$ satisfying the following properties for all $r_i \in \mathcal{R}$ and $v,w \in V$:
\begin{enumerate}
\item $r(v + w) = rv + rw$

\item $(r_1 + r_2)v = r_1v + r_2v$

\item $(r_1r_2)v = r_1(r_2v)$

\item $1_{\mathcal{R}}v = v$

\item $r\cdot 0_V = 0_V$

\item $0_{\mathcal{R}}\cdot v = 0_V$.
\end{enumerate}
\end{df}

For any semiring $R$, let   $R^n$ be the free module of rank $n$
over $R$. We define the \textbf{standard base}  to be
$e_1,\dots,e_n$, where

$$e_i=
\begin{cases}
1_{\mathcal{T}}=1_{R},& \text{in the}\ i^{th}\ \text{coordinate}\\
0_{\mathcal{T}}=0_{R},& \text{otherwise}
\end{cases}.$$
\vskip 0.3 truecm

 The tropical \textbf{identity matrix} is  the~$n\times n$ matrix with the standard base for its columns. We denote this matrix as~$I_{\mathcal{T}}=I.$

 A matrix $A\in \M_n({R})$  is \textbf{invertible}  if there  exists  a matrix~$B\in \M_n({R})$ such that~$AB=BA=I.$

From now on $\mathcal F := \mathcal T \cup \mathcal G \cup \{
0_{\mathcal F} \}$, where its set $\mathcal T $ is presumed to be a
group, and $\mathcal G$ is its ghost elements. We write $V =
\mathcal F ^ n,$ with the standard base $\{e _1, \dots, e_n\}$.

\begin{df} We define  vectors $v_1,\dots,v_k$ in  $V$ to be \textbf{(supertropically) dependent} if there exist~$a_1,\dots,a_k\in \mathcal T$ such that $\sum_{i\in[k]} a_i v_i\models_{gs} \overrightarrow{0_{\mathcal F}}$. Otherwise, this set of tropical vectors is called
\textbf{independent}.

We say that subspaces  
 $V_1,...,V_k$ of ${\mathcal F}^n$, are \textbf{(supertropically)
dependent}, if there are tangible~$v_i \in V_i$ which are (supertropically)
dependent.
\end{df}

By~\cite[Theorem~6.5]{STMA}, vectors $v_1,...,v_n\in {\mathcal F}^n$ are dependent iff~$\det(V)\in\mathcal{G}\cup\{0_{\mathcal{F}}\}$, where $V$ is the matrix having ~$v_1,...,v_n$ for its columns.

We define two types of special matrices:

\begin{df} An $n\times n$ matrix $P=(p_{i,j})$ is a \textbf{permutation matrix} if there exists $\pi\in S_n$ such that $$p_{i,j}=
\begin{cases}
0_{\mathcal F}, & j\ne \pi(i)\\
 1_{\mathcal F},&  j=\pi(i)
\end{cases}.$$  Since $\forall\pi\in S_n\ \exists ! \sigma\in S_n:\ \sigma=\pi^{-1}$ and $1_{\mathcal F}$ is invertible, a permutation matrix is always invertible.

\noindent An $n\times n$  matrix~$D=(d_{i,j})$ is  a
\textbf{diagonal matrix} if $$\exists\ a_1,\dots,a_n\in {\mathcal
F}:\ d_{i,j}=
\begin{cases}
0_{\mathcal F},&  j\ne i\\
 a_{i}, &  j=i
\end{cases},$$ which is invertible if and only if $\det(D)$ is invertible (i.e., $a_i\in \mathcal T, \ \forall i$).
\end{df}

\begin{rem}(See ~\cite[Proposition 3.9]{STMA})\label{inv}
 A tropical matrix $A$ is invertible if and only if it is  a product of a permutation matrix  and an invertible diagonal matrix.  These types of products are called \textbf{generalized permutation matrices}, that is $(d_{i,j})$ such that $$\exists\ a_1,\dots,a_n\in \mathcal T,\ \pi\in S_n:\ d_{i,j}=
\begin{cases}
0_{\mathcal F},&  j\ne \pi(i)\\
 a_{i},&  j=\pi(i)
\end{cases}.$$
\end{rem}

We define  three types of tropical elementary matrices, corresponding to the three elementary  matrix operations,  obtained by applying one such operation to the identity matrix.
\vskip 0.2 truecm

\textbf{A transposition matrix}  is obtained from the  identity
matrix by switching two rows (resp. columns). This matrix is
invertible: $E_{i,j}^{-1}=E_{i,j},$ and a product of transposition matrices yields  a permutation matrix.

\vskip 0.2 truecm

\textbf{An elementary diagonal multiplier}  is obtained from the
identity matrix where one row (resp. column) has been multiplied by
an invertible scalar.  This matrix is invertible:
$E^{-1}_{\alpha\cdot i^{th}row}=E_{\alpha^{-1}\cdot i^{th}row},$ and
a product of diagonal multipliers    yields  an invertible diagonal
matrix.

\vskip 0.2 truecm

\textbf{A Gaussian matrix} is defined to differ from the identity matrix by having a non-zero entry in a non-diagonal position. We denote as $E_{i^{th}row+\alpha\cdot  j^{th}row}$ the elementary Gaussian matrix  adding row $j$, multiplied  by $\alpha$, to row $i$. By Remark \ref{inv}, this matrix is not invertible.

\begin{df} A nonsingular matrix $A=(a_{i,j})$ is defined as \textbf{definite}
if $$\det(A)=0=a_{i,i},\ \forall i.$$\end{df}







\subsubsection{The supertropical approach}

Having established that algebraically $\mathcal{G}\cup\{-\infty \}$
and~$\models_{gs}$ effectively take the role of singularity and
equality over ${\mathcal F}$, we would like to extend additional
definitions to the supertropical setting, using ghosts for zero.

\noindent A \textbf{quasi-zero} matrix $Z_\mathcal{G}$ is a matrix
equal to $0_{\mathcal F}$ on the diagonal, and whose off-diagonal
entries are ghost or $0_{\mathcal F}$.

\noindent A \textbf{diagonally dominant} matrix is a nonsingular matrix
with a dominant permutation along the diagonal.

\noindent A \textbf{quasi diagonally dominant} matrix $D_\mathcal{G}$ is a
diagonally dominant matrix  $A$ whose off-diagonal entries are ghost or~$0_{\mathcal
F}$.

 \noindent A \textbf{quasi-identity} matrix $I_\mathcal{G}$ is a nonsingular, multiplicatively idempotent matrix
equal to $I + Z_\mathcal{G}$, where $Z_\mathcal{G}$ is a quasi-zero matrix.

Thus, every quasi-identity matrix $I_\mathcal{G}$ is quasi diagonally dominant.
Using the tropical determinant, we attain the tropical analog for
the well-known~\textit{adjoint}.
\begin{df}\label{adj} The ${r,c}$-\textbf{minor} $A_{r,c}$ of a matrix $A = (a_{i,j})$ is obtained by deleting  row~$r$ and column $c$ of $A$.
 The \textbf{adjoint matrix} $\adj(A)$ of $A$ is defined as the matrix~$(a'_{i,j} )$, where~$a'_{i,j} =\det(A_{j,i})$. When $\det(A)$ is invertible,
 the matrix $A^{\nabla}$ denotes $$\frac 1 {\det(A)}\adj(A).$$\end{df}
\vskip 0.2 truecm

 Notice that $\det(A_{j,i})$ may be obtained as the sum of all permutations in $A$  passing through $a_{j,i}$, but with $a_{j,i}$ deleted:    $$\det(A_{j,i})=\sum_{\tiny{\begin{array}{cc}\sigma\in S_n:\\\sigma(j)=i\end{array}}}a_{1,\sigma(1)}\cdots a_{j-1,\sigma(j-1)}a_{j+1,\sigma(j+1)}\cdots a_{n,\sigma(n)}.$$ When writing  each permutation as the product of  disjoint cycles, $\det(A_{j,i})$ can be presented as: $$\det(A_{j,i})=\sum_{\tiny{\begin{array}{cc}\sigma\in S_n:\\\sigma(j)=i\end{array}}}(a_{i,\sigma(i)}a_{\sigma(i),\sigma^2(i)}\cdots a_{\sigma^{-1}(j),j})C_{\sigma},$$ where~$C_{\sigma}$ is the product of
the remaining cycles.

\begin{df}  We say that $A^{\nabla}$ is the \textbf{quasi-inverse} of $A$ over ${{\mathcal F}}$, denoting $$I_A = AA^{\nabla}\text{ and }I'_A=A^{\nabla}A,$$ where $I_A,I'_A$ are quasi-identities  (see ~\cite[Theorem 2.8]{STMA2}).\end{df}

These supertropical definitions provide a tropical version for two
well-known algebraic properties, proved in Proposition 4.8.~and
Theorem 4.9.~of ~\cite{STMA}.

\begin{pro}\label{adjgs} $ \adj(AB) \models_{gs} \adj(B) \adj(A)$.\end{pro}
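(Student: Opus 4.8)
The plan is to reduce the statement to the multiplicativity-up-to-ghost of the determinant, namely Theorem~\ref{det}, applied to appropriate minors. Fix indices $i,j$. The $(i,j)$ entry of $\adj(AB)$ is $\det((AB)_{j,i})$, the determinant of the matrix obtained from $AB$ by deleting row $j$ and column $i$, while the $(i,j)$ entry of $\adj(B)\adj(A)$ is $\sum_{k} \adj(B)_{i,k}\adj(A)_{k,j} = \sum_k \det(B_{k,i})\det(A_{j,k})$. So the claim amounts to showing, for each pair $i,j$,
\begin{equation}
\det\big((AB)_{j,i}\big) \models_{gs} \sum_{k\in[n]} \det(A_{j,k})\,\det(B_{k,i}).
\end{equation}

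First I would observe that the submatrix $(AB)_{j,i}$ is itself a product of rectangular matrices: deleting column $i$ from $AB$ is the same as multiplying $AB$ (with row $j$ deleted) by the matrix $B$ with column $i$ deleted, and deleting row $j$ from $AB$ is the same as deleting row $j$ from the left factor $A$. Concretely, if $A^{(j)}$ denotes $A$ with row $j$ removed (an $(n-1)\times n$ matrix) and $B^{(i)}$ denotes $B$ with column $i$ removed (an $n\times(n-1)$ matrix), then $(AB)_{j,i} = A^{(j)}B^{(i)}$. Now I want a Cauchy--Binet-type expansion: $\det(A^{(j)}B^{(i)})$ should ghost-surpass $\sum_{k} \det\big((A^{(j)})_{\hat k}\big)\det\big((B^{(i)})^{\hat k}\big)$, where $(A^{(j)})_{\hat k}$ is the square submatrix of $A^{(j)}$ using all columns except $k$, and $(B^{(i)})^{\hat k}$ uses all rows except $k$. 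But $(A^{(j)})_{\hat k}$ is exactly $A$ with row $j$ and column $k$ deleted, i.e.\ $A_{j,k}$, and similarly $(B^{(i)})^{\hat k} = B_{k,i}$, so the right side is precisely the sum we want.

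Thus the real content is a supertropical Cauchy--Binet inequality (with $\models_{gs}$ in place of equality). I would prove it the same way Theorem~\ref{det} is proved: either by a direct combinatorial argument tracking permutations/paths, or — more cleanly — by invoking the transfer principle of \cite{LDTS} exactly as cited after Theorem~\ref{det}, since Cauchy--Binet is a polynomial identity over commutative rings and therefore transfers to a $\models_{gs}$-inequality over the supertropical semiring. Expanding $\det(A^{(j)}B^{(i)})$ as a sum over functions $[n-1]\to[n]$ of products of entries of $A^{(j)}$ and $B^{(i)}$, one groups terms by image: the terms coming from injective functions reassemble, after collecting signs (which are irrelevant here) and using commutativity, into $\sum_k \det(A_{j,k})\det(B_{k,i})$, while the terms from non-injective functions each contain a repeated row/column index and hence contribute ghost (or are dominated), which is exactly what produces the ghost surplus on the left. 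Summing the resulting entrywise inequalities over all $i,j$ — legitimate since $\models_{gs}$ respects addition by property~(3) of $\models_{gs}$ — yields the matrix inequality $\adj(AB)\models_{gs}\adj(B)\adj(A)$.

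The main obstacle I anticipate is making the Cauchy--Binet step rigorous in the supertropical setting: one must check that the non-injective contributions genuinely land in $\mathcal G\cup\{0_{\mathcal F}\}$ or are $\nu$-dominated by an injective term, rather than accidentally producing a strictly larger tangible value. This is precisely the subtlety already handled in the proof of Theorem~\ref{det} (a repeated index forces the corresponding monomial to appear with a "doubled" partner and hence be ghost, or to be absorbed), so I would either cite that proof's mechanism directly or, to keep the argument self-contained and short, deduce the inequality from the transfer principle, which bypasses the case analysis entirely.
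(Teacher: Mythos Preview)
The paper does not actually supply a proof of this proposition: it is stated in the Preliminaries as a known fact, with the sentence preceding it attributing the result to \cite[Proposition~4.8]{STMA}. So there is no ``paper's own proof'' to compare against here.

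Your argument is correct and is essentially the standard one. The identification $(AB)_{j,i}=A^{(j)}B^{(i)}$ is right, and the supertropical Cauchy--Binet step $\det(A^{(j)}B^{(i)})\models_{gs}\sum_k\det(A_{j,k})\det(B_{k,i})$ is exactly what is needed; obtaining it via the transfer principle of \cite{LDTS} is clean and matches how the paper justifies Theorem~\ref{det}. The combinatorial alternative you sketch also works: terms indexed by non-injective functions pair up under the transposition swapping two preimages of a repeated value, so their sum is ghost, while the injective terms reassemble into the desired sum of products of minors. Either route is acceptable, and the transfer-principle version is the shorter write-up.
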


As a result, one concludes from the fourth property of~$\models_{gs}$ (see Definition~\ref{gs}) and Theorem~\ref{det} that~$(AB)^\nabla\models_{gs} B^\nabla A^\nabla$, when~$AB$ is nonsingular.

\begin{thm}\label{detadj} $ $
\begin{enumerate}[\ \ \ (i)]
\item $\det(A\cdot \adj(A))= \det(A)^n$ .

\item $\det(\adj(A))= \det(A)^{n-1}$.
\end{enumerate}
\end{thm}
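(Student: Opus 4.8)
The plan is to derive both parts from the fundamental identity $A\,\adj(A)=\det(A)\,I_A$, where $I_A:=AA^{\nabla}$ is a quasi-identity by ~\cite[Theorem 2.8]{STMA2}, together with the near-multiplicativity $\det(AB)\models_{gs}\det(A)\det(B)$ of Theorem~\ref{det} and the fourth property of $\models_{gs}$ (a tangible element that ghost-surpasses $b$ must equal $b$). I would work, as the context of $A^{\nabla}$ requires, under the assumption that $A$ is nonsingular, so $\det(A)\in\mathcal T$ is invertible. The first thing to check is that $\det(I_A)=1_{\mathcal F}$ \emph{exactly}. Since $I_A=I+Z_{\mathcal G}$ is quasi diagonally dominant, its diagonal entries are $1_{\mathcal F}$ while its off-diagonal entries lie in $\mathcal G\cup\{0_{\mathcal F}\}$; hence in $\det(I_A)=\sum_{\sigma\in S_n}\prod_{i}(I_A)_{i,\sigma(i)}$ the Id-permutation contributes $1_{\mathcal F}$, and every other $\sigma$ uses at least one off-diagonal entry and so contributes an element of $\mathcal G\cup\{0_{\mathcal F}\}$ (the ghost ideal absorbs products). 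Thus $\det(I_A)\models_{gs}1_{\mathcal F}$, and since $I_A$ is nonsingular we have $\det(I_A)\in\mathcal T$, so the fourth property of $\models_{gs}$ forces $\det(I_A)=1_{\mathcal F}$.

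For part~(i), I would rewrite $A^{\nabla}=\frac{1}{\det(A)}\adj(A)$ as $A\,\adj(A)=\det(A)\,I_A$ and take determinants, using $\det(cM)=c^{n}\det(M)$ for a scalar $c$ (which is exact, by distributivity of multiplication over addition): this gives $\det(A\,\adj(A))=\det(\det(A)\,I_A)=\det(A)^{n}\det(I_A)=\det(A)^{n}$. For part~(ii), I would apply Theorem~\ref{det} with $B=\adj(A)$ to get $\det(A\,\adj(A))\models_{gs}\det(A)\det(\adj(A))$; by part~(i) the left-hand side is the tangible element $\det(A)^{n}$, so the fourth property of $\models_{gs}$ upgrades this to the honest equality $\det(A)^{n}=\det(A)\det(\adj(A))$, and multiplying through by the invertible scalar $\det(A)^{-1}$ yields $\det(\adj(A))=\det(A)^{n-1}$ (so that, incidentally, $\adj(A)$ is nonsingular).

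The one genuinely supertropical point — the step I expect to require the most care — is the exact evaluation $\det(I_A)=1_{\mathcal F}$. Over a ring, $AA^{-1}=I$ has determinant $1$ by triviality; here $I_A$ is only $I$ perturbed by off-diagonal ghosts, and a priori those contributions threaten to push $\det(I_A)$ into $\mathcal G$. It is precisely the nonsingularity built into the definition of a quasi-identity (\cite[Theorem 2.8]{STMA2}) that rules this out and turns $\det(I_A)\models_{gs}1_{\mathcal F}$ into the needed equality. Once that is secured, both identities are formal consequences of Theorem~\ref{det} and the order properties of $\models_{gs}$, with no further combinatorics required.
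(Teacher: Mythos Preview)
The paper does not supply its own proof; Theorem~\ref{detadj} is quoted in the Preliminaries from \cite[Theorem~4.9]{STMA}, so there is no in-text argument to compare against. The original proof in \cite{STMA} proceeds by a direct combinatorial analysis of the permutation sums in $\det(A\,\adj(A))$ and holds for arbitrary~$A$. Your derivation of~(ii) from~(i) via Theorem~\ref{det} and property~(4) of $\models_{gs}$ is correct and is essentially how (ii) is deduced from~(i) there as well.

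Your argument for~(i), however, is circular in content. You reduce~(i) to $\det(I_A)=1_{\mathcal F}$, obtained from (a)~$I_A=I+Z_{\mathcal G}$, hence $\det(I_A)\models_{gs}1_{\mathcal F}$, together with (b)~$I_A$ nonsingular, hence $\det(I_A)\in\mathcal T$. But since $\det(A\,\adj(A))=\det(A)^{n}\det(I_A)$ exactly by scalar extraction, statement~(i) is \emph{equivalent} to $\det(I_A)=1_{\mathcal F}$; and given~(a), this is in turn equivalent to~(b). So you have restated~(i) as ``$I_A$ is nonsingular'' and then cited that from \cite[Theorem~2.8]{STMA2} --- a sequel-paper result that is free to invoke \cite[Theorem~4.9]{STMA}. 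Regardless of that literature dependency, the reduction supplies no new argument: the form $I+Z_{\mathcal G}$ by itself does not rule out a ghost permutation-product in $\det(I_A)$ whose $\nu$-value meets or exceeds $(1_{\mathcal F})^\nu$, and showing that the identity permutation strictly dominates all others is precisely the combinatorial work that~(i) encodes and that a genuine proof must carry out. A secondary gap is that Theorem~\ref{detadj} carries no nonsingularity hypothesis on~$A$, whereas your route via $A^{\nabla}$ requires $\det(A)\in\mathcal T$; the combinatorial proof in \cite{STMA} covers the singular case as well.
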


\begin{rem} (see ~\cite[Remark 2.18]{PI&CP}) \label{adjnf}  For a
definite matrix $A$ we have
$$A^{\nabla}=\frac{1}{\det(A)}\adj(A)=\adj(A), $$ which is also
definite. \end{rem}

The following lemma has been proved in ~\cite[Lemma 3.2]{PI&CP}, and
states the connection between multiplicity of the determinant  and
the quasi-inverse matrix:

\begin{lem}\label{invnab} Let $P$ be an invertible matrix and $A$ be nonsingular.

\begin{enumerate}[\ \ \ (i)]

\item $P^\nabla =P^{-1}$.

\vskip 0.1 truecm

\item $\det(PA)=\det(P)\det(A)$.

\vskip 0.1 truecm

\item $(PA)^\nabla =A^\nabla P^\nabla$.

\vskip 0.1 truecm

\item If~$A=P\bar{A}$, where~$\bar{A}$ is the definite form of~$A$ with   left normalizer~$P$, then~$A^\nabla =\bar{A}^{\nabla}P^{-1}$ where~$\bar{A}^{\nabla}$
is  definite,  with  right normalizer~$P^{-1}$.
\end{enumerate}
\end{lem}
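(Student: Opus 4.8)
The plan is to prove Lemma~\ref{invnab} one part at a time, using the structure theory of invertible matrices (Remark~\ref{inv}) together with the multiplicativity results already established.

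For part (i), I would write $P$ as a generalized permutation matrix, $P = PD$ in the notation of Remark~\ref{inv} — more precisely $P = QD$ with $Q$ a permutation matrix and $D$ an invertible diagonal matrix. Since $Q^{-1} = Q^T$ is again a permutation matrix and $D^{-1}$ is the obvious diagonal matrix, a direct computation of $\adj(P)$ and $\det(P)$ from the permutation-expansion formulas shows that $\adj(P)/\det(P)$ equals the honest inverse $P^{-1} = D^{-1}Q^{-1}$; the key point is that for a generalized permutation matrix there is a single permutation contributing to the determinant and a single permutation contributing to each minor, so no ghosts appear and $I_P = PP^\nabla = P^{-1}P = I$ exactly. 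For part (ii), multiplicativity $\det(PA) \models_{gs} \det(P)\det(A)$ holds by Theorem~\ref{det}; to get equality I would argue that since $P$ is a generalized permutation matrix, multiplication by $P$ merely permutes and rescales the rows (or columns) of $A$, hence induces a bijection on $S_n$ preserving $\nu$-values of weights up to the fixed invertible factor $\det(P)$. Thus $A$ has a unique dominant permutation if and only if $PA$ does, and the determinant weight transforms exactly, giving honest equality rather than just $\models_{gs}$.

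For part (iii), I would combine (i), (ii), and Proposition~\ref{adjgs}: from $\adj(PA) \models_{gs} \adj(A)\adj(P)$ and the fact (property 4 of $\models_{gs}$) that a tangible element ghost-surpassed by another must be equal, it suffices to check that $\adj(PA)$ is tangible in the relevant sense, or more cleanly to verify the identity directly on generalized permutation matrices: writing $P = QD$, one has $PA = QDA$, and both $\adj$ and $\det$ behave multiplicatively against the generalized-permutation factor by the same row/column-permutation-and-scaling bookkeeping used in (i)–(ii). Then $(PA)^\nabla = \frac{1}{\det(PA)}\adj(PA) = \frac{1}{\det(P)\det(A)}\adj(A)\adj(P) = A^\nabla P^\nabla$. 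Part (iv) follows by specializing: if $A = P\bar A$ with $\bar A$ definite, then by (iii) $A^\nabla = \bar A^\nabla P^\nabla = \bar A^\nabla P^{-1}$ using (i), and $\bar A^\nabla$ is definite by Remark~\ref{adjnf}; the normalizer claim is then read off from $\bar A^\nabla P^{-1}$ having $P^{-1}$ as its right factor.

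The main obstacle I anticipate is upgrading the various $\models_{gs}$ inequalities (Theorem~\ref{det}, Proposition~\ref{adjgs}) to genuine equalities in the presence of the invertible factor $P$. The resolution is that generalized permutation matrices act on permutations of $S_n$ by a group action that is compatible with the weight function — concretely, if $P$ realizes $\pi \in S_n$ with diagonal scalars $d_i$, then the weight of $\sigma$ in $PA$ equals $\bigl(\prod_i d_i\bigr)$ times the weight of $\pi^{-1}\sigma$ (or $\sigma\pi^{-1}$) in $A$ — so no coincidences among weights are created or destroyed, and no ghost contributions appear. Making this bookkeeping precise for both $\det$ and for every minor $A_{j,i}$ simultaneously is the one place where care is needed; everything else is formal manipulation with the already-cited results.
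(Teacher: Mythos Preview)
The paper does not actually prove this lemma: it is quoted verbatim with the attribution ``has been proved in \cite[Lemma 3.2]{PI\&CP}'' and no argument is supplied. So there is no in-paper proof to compare against.

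Your outline is sound and is the natural approach. A couple of small points worth tightening. In part~(iii), the route via Proposition~\ref{adjgs} and property~4 of~$\models_{gs}$ does not quite work as stated, because $\adj(PA)$ need not be tangible entrywise (only $\det(PA)$ is tangible); you cannot invoke ``tangible forces equality'' globally. Your alternative --- verifying $\adj(PA)=\adj(A)\adj(P)$ directly from the row-permutation-and-scaling description of $P$ --- is the right fix: each minor $(PA)_{j,i}$ is, up to a single tangible scalar and a row permutation, a minor of $A$, so no new coincidences among permutation weights are introduced and the identity holds on the nose. Once that bookkeeping is written out carefully (tracking which row of $A$ lands in which row of $PA$, and hence which minor of $A$ corresponds to which minor of $PA$), parts~(i)--(iii) are complete and~(iv) is immediate from~(iii), (i), and Remark~\ref{adjnf} as you say.
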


\vskip 0.35 truecm

\begin{center}\textbf{Matrix invariants}\end{center}

Let $A \in \M_n({{\mathcal F}})$. We continue the supertropical
approach by defining $ v \in V$, not all singular, such that
$\exists \lambda \in \mathcal T\cup\{0_{\mathcal F}\}\text{ where
}Av\models_{gs} \lambda v,$ to be a \textbf{supertropical
eigenvector}
 of $A$ with a \textbf{supertropical eigenvalue}~$\lambda$, having an
\textbf{eigenmatrix} $A+\lambda I$. The \textbf{eigenspace} $V_\lambda$ is the set of
eigenvectors with eigenvalue $\lambda$.

\noindent The \textbf{characteristic polynomial} of $A$ (also called
the maxpolynomial, cf.\cite{MA}) is defined to be
$$f_A(x)=\det(xI+A).$$ The tangible value of its roots  are the
eigenvalues of $A$, as shown in~\cite[Theorem~7.10]{STMA}. Following
to Definition \ref{root}, we may have \textit{corner eigenvalues}
and \textit{non-corner eigenvalues}.

 The coefficient of $x^{n-k}$ in this polynomial is the sum of  determinants of all $k\times k$ \textbf{principal sub-matrices}, otherwise known as the trace of the $k^{th}$ compound matrix of~$A$.
 Thus, this coefficient, which we denote as $\alpha_k$,  takes the dominant value among the permutations on all subsets of indices of size $k$: $$\alpha_k=\sum_{\tiny{\begin{array}{c}I\subseteq[n]:\\|I|=k\ \end{array}}}\sum_{\sigma\in S_k}\prod_{i\in I}a_{i,\sigma(i)}.$$
 When $\alpha_k \in \mathcal T,$ we define the \textbf{index set of} ${\alpha_k}$,  denoted by $\Ind_k$, a set $I\subseteq[n]$ on which the dominant permutation defining $\alpha_k$ is obtained.

Let $f_A(x)=\sum_{i=0}^n \alpha_ix^{n-i} $ be the characteristic polynomial  of $A$, with the essential  polynomial $$f^{es}_A(x)=\sum_k \alpha_{i_{_k}}x^{n-{i_{_k}}}.$$
Let $\lambda$ be the corner eigenvalue obtained between the essential monomial  $\alpha_{i_{_{k-1}}}x^{n-(i_{_{k-1}})}$ and the subsequent essential monomial  $\alpha_{i_{_k}}x^{n-i_{_k}}$. We denote
 $I_\lambda=\Ind_{i_k}\setminus\Ind_{i_{k-1}}.$

\begin{thm}\label{alg} (The eigenvectors algorithm, see ~\cite[Remark 5.3 and Theorem 5.6]{STMA2}.)
Let $t\in I_\lambda.$  The tangible value of
the $t^{th}$-column  of~$\adj(\lambda I+A)$ (see Notation~\ref{nuhat}), is a tropical eigenvector
of $A$ with respect to the eigenvalue~$\lambda$.
\end{thm}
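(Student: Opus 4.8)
The plan is to relate the $t^{\text{th}}$ column $v$ of $\adj(\lambda I + A)$ to the matrix $B:=\lambda I + A$ via the classical-looking identity $B\cdot\adj(B) \models_{gs} \det(B) I$, which over the supertropical semiring follows from Theorem~\ref{detadj}(i) together with the cofactor expansion made explicit in the displayed formula after Definition~\ref{adj}. Reading off the $t^{\text{th}}$ column of that matrix identity gives $B v \models_{gs} \det(B)\, e_t$, i.e. $(\lambda I + A) v \models_{gs} \det(\lambda I + A)\, e_t = f_A(\lambda)\, e_t$. The first key step is therefore to record this column identity carefully, keeping track of the fact that $\models_{gs}$ is only a surpassing relation, not an equality, and that we must pass from $\adj(B)$ to its tangible lift $\widehat{\adj(B)}$ as in the statement of Theorem~\ref{alg}; properties (2) and (3) of $\models_{gs}$ let us multiply and add while preserving the relation, and $\widehat{v}\cong_\nu v$ controls the $\nu$-values.

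The second step is to analyze $f_A(\lambda)$. Since $\lambda$ is the \emph{corner} eigenvalue obtained between the two subsequent essential monomials $\alpha_{i_{k-1}} x^{n-i_{k-1}}$ and $\alpha_{i_k} x^{n-i_k}$, the value $f_A(\lambda)$ is attained (up to $\nu$-equivalence) by both of these monomials simultaneously, hence $f_A(\lambda)\in\mathcal G$, so $f_A(\lambda) \models_{gs} 0_R$. Plugging this into the column identity yields $(\lambda I + A)\widehat{v}$ is ghost (or $0_R$) in every coordinate except possibly the $t^{\text{th}}$, and in the $t^{\text{th}}$ coordinate it ghost-surpasses $f_A(\lambda)$, which is itself a ghost. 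So $(\lambda I + A)\widehat v \models_{gs} \vec{0}_{\mathcal F}$, which rearranges to $A\widehat v \models_{gs} \lambda \widehat v$ — here one uses that $(\lambda I + A)\widehat v = \lambda\widehat v + A\widehat v$ and that $x \models_{gs} 0_R$ together with $x = \lambda\widehat v_j + (A\widehat v)_j$ forces $A\widehat v_j \models_{gs} \lambda \widehat v_j$ or the ghost domination comes from the $\lambda\widehat v$ term, which still gives the surpassing relation in the $j$-coordinate. This is the step requiring the most care.

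The remaining step is to check that $\widehat v$ is a legitimate eigenvector, i.e. that it is not the zero vector (equivalently, not all of its entries are $0_{\mathcal F}$) — otherwise the conclusion would be vacuous. This is where the hypothesis $t\in I_\lambda = \Ind_{i_k}\setminus \Ind_{i_{k-1}}$ is essential and where I expect the main obstacle to lie. The point is that the $t^{\text{th}}$ column of $\adj(\lambda I+A)$ consists of the minors $\det((\lambda I+A)_{j,t})$, and the condition $t\in I_\lambda$ guarantees, via the description of $\alpha_{i_k}$ and $\alpha_{i_{k-1}}$ as dominant permutation-sums over index sets of sizes $i_k$ and $i_{k-1}$, that at least one such minor is tangible and nonzero (in fact the diagonal minor obtained by deleting row and column $t$ contributes essentially). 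Establishing this nonvanishing cleanly — tracing how the index set $I_\lambda$ selects precisely the columns of the eigenmatrix that survive — is the crux, and I would handle it by expanding $\det((\lambda I+A)_{j,t})$ into its permutation/cycle form and exhibiting a single dominant contributing permutation. Once nonvanishing is secured, combining it with the surpassing relation $A\widehat v\models_{gs}\lambda\widehat v$ completes the proof that $\widehat v$ is a supertropical eigenvector of $A$ for $\lambda$.
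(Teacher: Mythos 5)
The paper does not prove Theorem~\ref{alg}; it is quoted from \cite[Remark 5.3 and Theorem 5.6]{STMA2} and only \emph{illustrated} in \S 3.2, so there is no internal proof to compare against. Your starting point is the correct one and, as far as I can tell, matches the cited source: from $B\cdot\adj(B)=\det(B)\,I_B$ with $I_B$ a quasi-identity, the $t^{\text{th}}$ column gives that $Bv$ is ghost for $v=\adj(B)_{\cdot,t}$ and $B=\lambda I+A$, because $\det(B)=f_A(\lambda)$ is ghost at a corner root; and the ``rearranging'' of $(\lambda I+A)w$ ghost into $Aw\models_{gs}\lambda w$ is valid for tangible $w$, since $\lambda w_j\in\mathcal T$ can never be the strictly dominant ghost term.

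The genuine gap is the passage from $v$ to $\widehat v$. Properties (2)--(3) of $\models_{gs}$ and $\nu$-equivalence only give $v\models_{gs}\widehat v$, hence $Bv\models_{gs}B\widehat v$; this says $B\widehat v$ is \emph{dominated by} a ghost, not that it \emph{is} one. If an entry $v_{k^*}=\det(B_{t,k^*})$ is ghost while $b_{j,k^*}$ is tangible and $b_{j,k^*}v_{k^*}$ uniquely dominates $(Bv)_j$, then $(B\widehat v)_j$ is tangible and $A\widehat v\models_{gs}\lambda\widehat v$ fails at $j$. What closes this is the permutation-pairing mechanism behind the ghostness of the off-diagonal of $B\adj(B)$: for $j\neq t$, a dominant permutation $\sigma$ with $\sigma(t)=k^*$ is matched with $\sigma\circ(t\,j)$, contributing the same weight but through $k'=\sigma(j)\neq k^*$, so $\max_k b_{j,k}\det(B_{t,k})$ is always attained at two distinct $k$'s and the ghostness survives the tangible lift. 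Relatedly, your reading of the hypothesis $t\in I_\lambda$ is misdirected: nonvanishing of $\widehat v$ is automatic, since the minor $B_{t,t}$ has $\lambda+a_{ii}\geq\lambda$ on its whole diagonal, so $v_t=\det(B_{t,t})\geq_\nu\lambda^{n-1}>0_R$ for \emph{every} $t$. The role of $t\in I_\lambda$ is to make the remaining coordinate $j=t$ work, where the pairing above does not apply: there one uses that the two essential monomials $\alpha_{i_{k-1}}\lambda^{n-i_{k-1}}$ and $\alpha_{i_k}\lambda^{n-i_k}$ attaining $f_A(\lambda)$ arise from permutations that act differently on $t$ precisely because $t\notin\Ind_{i_{k-1}}$ while $t\in\Ind_{i_k}$, so the ghostness of $\det(B)$ is witnessed by two distinct cofactors $b_{t,k}\det(B_{t,k})$. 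That is the delicate step, not nonvanishing.
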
\noindent This algorithm will be demonstrated in \S 3.2.

\vskip 0.25 truecm The Supertropical Cayley-Hamilton Theorem has
been proved in ~\cite[Theorem 5.2]{STMA}, and is as follows:

\begin{thm}\label{H-C} Any matrix $A$ satisfies its tangible characteristic polynomial $f_A$,  in the sense that~$f_A(A)$ is ghost. \end{thm}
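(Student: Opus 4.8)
My plan is to derive the supertropical Cayley--Hamilton theorem from the classical one by the same ``split the signed identity'' device that underlies Theorem~\ref{det}; here, though, one needs only the evaluation of a polynomial identity, not the full symmetrized transfer machinery.

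First I would treat the entries of $A$ as commuting indeterminates over $\mathbb{Z}$. Classical Cayley--Hamilton reads $\sum_{k=0}^{n}(-1)^{k}c_{k}A^{n-k}=0$ in $\M_{n}\bigl(\mathbb{Z}[\{a_{i,j}\}]\bigr)$, where $c_{k}=\sum_{|I|=k}\det(A_{I})$ is the sum of the $k\times k$ principal minors (so $c_{0}=1$, $c_{n}=\det A$). Fixing an entry $(i,j)$ and expanding $\bigl(\sum_{k}(-1)^{k}c_{k}A^{n-k}\bigr)_{i,j}$ into monomials in the $a_{i,j}$, every monomial comes from a triple $\tau=(k;\,(I,\sigma);\,W)$ --- a size-$k$ index set $I$ with $\sigma\in S_{I}$, together with a walk $W$ from $i$ to $j$ of length $n-k$ on $[n]$ --- and it is produced with sign $(-1)^{k}\operatorname{sgn}(\sigma)$. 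As this is the zero polynomial, for each monomial $m$ the number of triples yielding $m$ with sign $+$ equals the number yielding it with sign $-$. Letting $P_{i,j}$ (resp.\ $N_{i,j}$) be the sum, counted with multiplicity, of the monomials of the positively- (resp.\ negatively-) signed triples, we obtain an \emph{honest} identity $P_{i,j}=N_{i,j}$ in $\mathbb{N}[\{a_{i,j}\}]$.

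Then I would evaluate at the actual entries of $A$ in $\mathcal{F}$. Since evaluation is a semiring homomorphism, $P_{i,j}(A)=N_{i,j}(A)$, hence $P_{i,j}(A)+N_{i,j}(A)=P_{i,j}(A)+P_{i,j}(A)=P_{i,j}(A)^{\nu}\in\mathcal{G}\cup\{0_{\mathcal{F}}\}$ is a ghost. On the other hand $P_{i,j}(A)+N_{i,j}(A)$ is by construction the tropical sum over \emph{all} triples $\tau$ of the associated product of entries of $A$, and this is exactly $\bigl(f_{A}(A)\bigr)_{i,j}$ when $f_{A}(x)=\sum_{k}\alpha_{k}x^{n-k}$ carries its intrinsic coefficients $\alpha_{k}=\sum_{|I|=k}\sum_{\sigma\in S_{I}}\prod_{i'\in I}a_{i',\sigma(i')}$, because $\alpha_{k}(A^{n-k})_{i,j}$ unwinds to precisely the tropical sum of those triples having that value of $k$. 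Thus $f_{A}(A)$ is ghost.

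The step I expect to be the genuine obstacle is the passage to the \emph{tangible} characteristic polynomial of the statement: if some $\alpha_{k}$ is itself a ghost it must be replaced by a tangible representative $\widehat{\alpha_{k}}\cong_{\nu}\alpha_{k}$, and the clean argument above no longer closes, since an entry of $f_{A}(A)$ that was ghost only because of a ghost coefficient could \emph{a priori} become tangible. To fix this I would show that a maximal summand $\widehat{\alpha_{k}}(A^{n-k})_{i,j}$ of such an entry --- witnessed by a cycle cover $(I,\sigma)$ attaining $\alpha_{k}^{\nu}$ and a walk $W$ realizing $(A^{n-k})_{i,j}$ --- always has a ``sibling'' maximal summand at some $k'\neq k$: if $W$ meets $I$, splice the $\sigma$-cycle through the shared vertex into $W$, shifting length from the cyclic part to the walk part; if $W$ avoids $I$, then $|I|+|V(W)|\le n$ forces $W$ not to be a simple path, so it contains a closed sub-walk which can instead be absorbed into the cyclic part, the leftover degenerate cases being tied already through the ghost determinant or trace slot. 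This is essentially the classical ``clow sequence'' sign-reversing involution reread as a $\nu$-value-preserving pairing; making it airtight, and checking that it is insensitive to the choice of the $\widehat{\alpha_{k}}$, is where the real work lies.
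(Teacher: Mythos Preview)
The paper does not give its own proof of this statement; it is quoted from \cite[Theorem~5.2]{STMA}, with the remark that alternative arguments appear in \cite{CH} (Straubing's combinatorial proof) and in \cite{LDTS} (via the transfer principle). So there is no in-paper proof to compare against directly; your proposal should be measured against those cited routes.

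Your first two paragraphs are exactly the transfer-principle argument of \cite{LDTS}: lift to commuting indeterminates, use classical Cayley--Hamilton to obtain $P_{i,j}=N_{i,j}$ in $\mathbb{N}[\{a_{i,j}\}]$, evaluate in $\mathcal{F}$, and conclude that $P_{i,j}(A)+N_{i,j}(A)$ is ghost. That part is correct, and it proves $f_A(A)$ ghost when $f_A$ carries its \emph{intrinsic} (possibly ghost) coefficients $\alpha_k$.

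You have also located the genuine obstruction correctly. After collapsing $\sum_{I,\sigma}$ to a single tangible $\widehat{\alpha_k}$, the monomial pairing supplied by $P_{i,j}=N_{i,j}$ may match two triples sharing the same $k$ and the same walk $W$ but differing only in $(I,\sigma)$; such a match gives no second witness at the coarser $(k,W)$ level, so ghostness of $(\widehat{f_A}(A))_{i,j}$ is not yet established. Your proposed repair---splice a $\sigma$-cycle into $W$ at a shared vertex, or, if $W$ avoids $I$, use pigeonhole on the $n-k+1$ vertex visits inside the $(n-k)$-element set $[n]\setminus I$ to extract a closed sub-walk---is precisely Straubing's sign-reversing involution on clow sequences, i.e.\ the method of \cite{CH}. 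It works, with two points to make explicit: extract the cycle at the \emph{first} repetition so the map is well defined and involutive, and observe that the resulting pair $(k',W')$ is again dominant because the underlying monomial is unchanged while $\widehat{\alpha_{k'}}^{\nu}$ dominates the particular cycle-cover value, forcing $\widehat{\alpha_{k'}}\cdot\mathrm{wt}(W')$ to be $\nu$-equal to the original dominant term. With those details filled in, your argument is complete, and in effect stitches together the two proofs the paper cites.
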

\noindent  One can find a  combinatorial proof in ~\cite{CH} and  a proof using the transfer principle in ~\cite{LDTS}.

In analogy to the classical theory, we have
\begin{pro}\label{pev} (\cite[Proposition~7.7]{STMA}) The roots of the polynomial $f_A(x)$
are precisely the supertropical eigenvalues of $A$.\end{pro}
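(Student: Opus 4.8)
The plan is to prove both inclusions by translating ``root of $f_A$'' into a statement about dependence of the columns of the eigenmatrix. Since $f_A(x)=\det(xI+A)$ and the coefficient of $x^{n-k}$ in this determinant is the sum $\alpha_k$ of the $k\times k$ principal minors, evaluating at a scalar (a semiring homomorphism) gives $f_A(r)=\det(rI+A)$ for every $r\in R$. Hence, by Definition~\ref{root}, $r$ is a root of $f_A$ if and only if $\det(rI+A)\in\mathcal G\cup\{0_{\mathcal F}\}$, which by \cite[Theorem~6.5]{STMA} is equivalent to the columns of $rI+A$ being supertropically dependent. It then remains to see that this last condition is the same as $r$ being a supertropical eigenvalue of $A$.

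Assume first that $r$ is a root, so the columns of $rI+A$ are dependent: there are $a_1,\dots,a_n\in\mathcal T$ with $\sum_i a_i\cdot(\text{$i$th column of }rI+A)\models_{gs}\overrightarrow{0_{\mathcal F}}$. Putting $v=(a_1,\dots,a_n)$, a vector all of whose coordinates are tangible, this reads $(rI+A)v=Av+rv\models_{gs}\overrightarrow{0_{\mathcal F}}$. I claim this forces $Av\models_{gs}rv$, which I would check coordinate by coordinate: for $r=0_{\mathcal F}$ it is immediate, while for $r\in\mathcal T$ the entry $rv_i$ is tangible and nonzero, so $(Av)_i+rv_i\in\mathcal G\cup\{0_{\mathcal F}\}$ rules out $(rv_i)^{\nu}>(Av)_i^{\nu}$, and in the remaining cases either $(Av)_i$ is a ghost with $(Av)_i^{\nu}\ge(rv_i)^{\nu}$, or $(Av)_i$ and $rv_i$ are tangibles of the same $\nu$-value and hence equal; in all cases $(Av)_i\models_{gs}rv_i$. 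Thus $r$ is a supertropical eigenvalue, with tangible eigenvector $v$.

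Conversely, suppose $\lambda$ is a supertropical eigenvalue, witnessed by an eigenvector $v$ with $Av\models_{gs}\lambda v$. Adding $\lambda v$ to both sides and using that $\models_{gs}$ is a partial order compatible with addition, we obtain $(\lambda I+A)v=Av+\lambda v\models_{gs}\lambda v+\lambda v=(\lambda v)^{\nu}\models_{gs}\overrightarrow{0_{\mathcal F}}$. When $v$ is tangible this is a dependence relation among the columns of $\lambda I+A$, so $\det(\lambda I+A)=f_A(\lambda)$ lies in $\mathcal G\cup\{0_{\mathcal F}\}$ and $\lambda$ is a root of $f_A$, completing the argument in that case.

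The step I expect to be the main obstacle is precisely this last one when $v$ is only assumed \emph{not all singular}: then $v$ may carry ghost coordinates, the relation $(\lambda I+A)v\models_{gs}\overrightarrow{0_{\mathcal F}}$ need not be a tangible dependence relation, and one cannot in general repair this by replacing the coordinates of $v$ with $\nu$-equivalent tangible elements without destroying the relation. I would handle it by exhibiting a genuinely tangible eigenvector for $\lambda$ via the adjoint: for $t\in I_\lambda$ the tangible value of the $t$th column of $\adj(\lambda I+A)$ is an eigenvector of $A$ for $\lambda$, by Theorem~\ref{alg} and the identity $(\lambda I+A)\adj(\lambda I+A)=\det(\lambda I+A)I$ (whose off-diagonal entries are determinants of matrices with a repeated column, hence ghosts), and then feeding this tangible vector into the column-dependence criterion. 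Making precise that every supertropical eigenvalue is reached in this way, thereby closing the gap between an arbitrary not-all-singular eigenvector and a tangible one, is where the combinatorics of the index sets $\Ind_k$ and $I_\lambda$ would have to be invoked.
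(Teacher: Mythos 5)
This result is stated in the paper as a citation of \cite[Proposition~7.7]{STMA}; the paper gives no proof of its own, so there is no ``paper's proof'' to compare against, and the proposal must be judged on its own terms.

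The forward direction (a root $r$ of $f_A$ is an eigenvalue) is essentially right: from $\det(rI+A)\in\mathcal G\cup\{0_{\mathcal F}\}$ one gets, via \cite[Theorem~6.5]{STMA}, coefficients $a_1,\dots,a_n$ (tangible, possibly some zero, not all zero) with $(rI+A)v\models_{gs}\overrightarrow{0_{\mathcal F}}$ for $v=(a_1,\dots,a_n)$, and the coordinate-by-coordinate case analysis showing $(Av)_i\models_{gs}rv_i$ is correct, including the case $v_i=0_{\mathcal F}$. One small slip worth flagging: the identity $(\lambda I+A)\adj(\lambda I+A)=\det(\lambda I+A)I$ invoked later does not hold in the supertropical semiring; the off-diagonal entries are ghosts, not zeros, so one only has $(\lambda I+A)\adj(\lambda I+A)\models_{gs}\det(\lambda I+A)I$, and in fact the product is $\det(\lambda I + A)$ times a quasi-identity.

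The reverse direction contains the genuine gap, and the proposed repair does not work. You correctly observe that an eigenvector $v$ is only required to be ``not all singular,'' so it may have ghost or zero coordinates, and then $(\lambda I+A)v\models_{gs}\overrightarrow{0_{\mathcal F}}$ is not a tangible dependence of the columns of $\lambda I+A$; nor can one simply replace $v$ by $\widehat{v}$, since a ghost coordinate that was carrying the ghostness of a product entry would become tangible and the relation could be destroyed. However, the suggested fix---``exhibit a tangible eigenvector via Theorem~\ref{alg} and the adjoint''---is circular. Theorem~\ref{alg} produces eigenvectors from the index sets $I_\lambda=\Ind_{i_k}\setminus\Ind_{i_{k-1}}$, which are defined only for $\lambda$ a \emph{corner root} of $f_A$; using it presupposes that $\lambda$ is already known to be a root of $f_A$, which is precisely the implication you are trying to establish. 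What is actually needed is a direct argument that $Av\models_{gs}\lambda v$ (with $v$ merely not all singular) forces $\det(\lambda I+A)$ to be ghost, and that requires more than the dependence criterion as stated; this step is missing.
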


\begin{rem}\label{eigen1}
Recall that a supertropical polynomial is $r$-\textbf{primary} if it has the unique supertropical root $r$. It is well-known that  any tropical $r$-primary polynomial has the form $(x + r)^m$  for some $m\in \mathbb{N}$, and  any tropical essential polynomial $f_A$ can be factored as a function to a product  of primary polynomials, and thus of the form $\prod _i g_i$ where  $g_i = (x + r_i)^{m_i}.$
The supertropical version of this is given in \cite[Theorem~8.25 and~Theorem~8.35]{STA}.
\end{rem}

 Another classical property attained in this extended structure is:

\begin{pro}\label{pev}
If $ \lambda \in \mathcal T\cup\{0_{\mathcal F}\}$ is a supertropical
eigenvalue of a matrix $A \in \M_n({{\mathcal F}})$ with eigenvector
$v$, then $\lambda^i$ is a supertropical eigenvalue of  $A^i$, for
every $i\in \mathbb{N}$, with respect to the same
eigenvector.\end{pro}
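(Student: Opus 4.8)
The plan is to argue directly from the definition of a supertropical eigenvector together with the compatibility of $\models_{gs}$ with multiplication. Suppose $\lambda\in\mathcal T\cup\{0_{\mathcal F}\}$ is a supertropical eigenvalue of $A$ with eigenvector $v$, so that $Av\models_{gs}\lambda v$. I want to show, by induction on $i$, that $A^i v\models_{gs}\lambda^i v$. The base case $i=1$ is the hypothesis (and $i=0$ is trivial).

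For the inductive step, assume $A^{i}v\models_{gs}\lambda^{i}v$. First apply property (2) of $\models_{gs}$ (multiply a ghost-surpassing relation by a scalar/matrix on the left): from $Av\models_{gs}\lambda v$ we get $A^{i}(Av)\models_{gs} A^{i}(\lambda v)=\lambda (A^{i}v)$, using that left multiplication by the fixed matrix $A^{i}$ preserves $\models_{gs}$ entrywise — this is just property (3) applied coordinatewise, since each entry of $A^{i}w$ is a sum of products of entries of $w$ with fixed scalars. Next, from the inductive hypothesis $A^{i}v\models_{gs}\lambda^{i}v$, multiply on the left by the scalar $\lambda$ (property (2)) to obtain $\lambda(A^{i}v)\models_{gs}\lambda^{i+1}v$. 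Since $\models_{gs}$ is a partial order (property (1), transitivity), chaining $A^{i+1}v=A^{i}(Av)\models_{gs}\lambda(A^{i}v)\models_{gs}\lambda^{i+1}v$ gives $A^{i+1}v\models_{gs}\lambda^{i+1}v$, completing the induction. Note that $\lambda^{i}v$ is again tangible-or-zero times $v$, and $v$ is not all-singular, so $\lambda^{i}$ genuinely qualifies as an eigenvalue of $A^{i}$ with eigenvector $v$.

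I should also briefly address the degenerate case $\lambda=0_{\mathcal F}$: then $\lambda v=\overrightarrow{0_{\mathcal F}}$ and $Av\models_{gs}\overrightarrow{0_{\mathcal F}}$ means $Av$ is a ghost vector; multiplying repeatedly by $A$ keeps it ghost (a ghost times anything is ghost or zero), so $A^{i}v\models_{gs}\overrightarrow{0_{\mathcal F}}=\lambda^{i}v$, consistent with the convention $0_{\mathcal F}^{i}=0_{\mathcal F}$. This needs no separate treatment if one is careful, but it is worth a remark.

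I do not expect a genuine obstacle here; the only point requiring a little care is making sure the operations invoked are exactly the four listed properties of $\models_{gs}$ applied coordinatewise to the matrix action, rather than some stronger cancellation that does not hold tropically. In particular one must resist writing $A^{i+1}v = \lambda^{i} Av$ or $\lambda A^{i}v$ as an equality; everything goes through $\models_{gs}$ and transitivity, and no step inverts or cancels a ghost. That discipline is the whole content of the proof.
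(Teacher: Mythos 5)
Your proof is correct, and it is precisely the routine argument the paper leaves implicit: the proposition is stated in the preliminaries without proof, and reappears (again unproved) as the first implication in Remark~\ref{hierarchy}. Your induction—applying properties (2) and (3) of $\models_{gs}$ coordinatewise to the matrix action and chaining by transitivity—is the right discipline; no cancellation of ghosts is invoked, and the degenerate case $\lambda=0_{\mathcal F}$ is already absorbed by the general inductive step, so your side remark is accurate but dispensable.
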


\begin{thm}\label{CPIP} Let $A$ be a nonsingular matrix.

\begin{enumerate}
\item  (~\cite[Theorem~3.6]{CP}) For any $m\in \mathbb{N}$ we have $$f_{A^m}(x^m) \models_{gs} (f_A(x))^m,$$ implying that the~$m^{th}$-root of every corner eigenvalue of~$A^m$ is a corner eigenvalue of~$A$.
\vskip 0.25 truecm

\item (~\cite[Theorem~4.1]{EP}) For $A^\nabla$,  the quasi-inverse of $A$, we have $$\det(A)f_{A^\nabla}(x) \models_{gs} x^nf_A(x^{-1}),$$ implying that the inverse   of every corner eigenvalue of ~$A^\nabla$ is a corner eigenvalue of~$A$.
\end{enumerate}
\end{thm}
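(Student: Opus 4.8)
The plan is, for each of the two parts, to establish the displayed $\models_{gs}$-relation between polynomials first and then extract the stated consequence about corner eigenvalues, which is a formal matter once the relation is in hand.

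\emph{Part (1).} Since $xI$ commutes with $A$, I would expand $(xI+A)^m$ by the binomial formula: for $m\ge 2$ (the case $m=1$ being trivial),
$$(xI+A)^m=x^mI+A^m+G,\qquad G=\sum_{0<k<m}\binom{m}{k}x^{m-k}A^k,$$
where every entry of $G$ is a ghost or $0_{\mathcal F}$, because $\binom{m}{k}\ge 2$ for $0<k<m$ and $c\cdot a$ with integer $c\ge 2$ is ghost. Hence $(xI+A)^m\models_{gs}x^mI+A^m$ entrywise, and since $\models_{gs}$ is compatible with sums and products we get $\det\big((xI+A)^m\big)\models_{gs}f_{A^m}(x^m)$; iterating Theorem~\ref{det} also gives $\det\big((xI+A)^m\big)\models_{gs}(f_A(x))^m$. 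These two estimates only bound $\det\big((xI+A)^m\big)$ from below and do not by themselves compare $f_{A^m}(x^m)$ with $(f_A(x))^m$; to bridge the gap I would invoke the transfer principle (cf.~\cite{LDTS}). Lift $A$ to a matrix $\tilde A$ over the valued field $K$ with $\trop(\tilde A)=A$; over an algebraic closure its characteristic polynomial factors as $\prod_i(x-\lambda_i)$, so $f_{A^m}(x^m)$ tracks $\prod_i(x^m-\lambda_i^m)=\prod_{i,j}(x-\zeta^{\,j}\lambda_i)$ with $\zeta^m=1$, while $(f_A(x))^m$ tracks $\prod_i(x-\lambda_i)^m$. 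Since $\zeta^{\,j}$ is a root of unity, $\trop(\zeta^{\,j}\lambda_i)=\trop(\lambda_i)$, so the two lifted polynomials have the same Newton polygon; hence their tropicalizations have the same essential polynomial, which yields the coefficientwise relation $f_{A^m}(x^m)\models_{gs}(f_A(x))^m$.

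\emph{Part (2).} The underlying classical identity is the equality $\det(A)\det(xI+A^{-1})=\det(xA+I)=x^nf_A(x^{-1})$. Supertropically, $A(xI+A^\nabla)=xA+AA^\nabla=xA+I_A$, and as $I_A=I+Z_{\mathcal G}$ is a quasi-identity, $A(xI+A^\nabla)\models_{gs}xA+I$ entrywise, so $\det\big(A(xI+A^\nabla)\big)\models_{gs}\det(xA+I)=x^nf_A(x^{-1})$, whereas Theorem~\ref{det} gives $\det\big(A(xI+A^\nabla)\big)\models_{gs}\det(A)f_{A^\nabla}(x)$. As in Part~(1), the desired $\det(A)f_{A^\nabla}(x)\models_{gs}x^nf_A(x^{-1})$ then follows by lifting $A^\nabla$ to $\tilde A^{-1}$, whose eigenvalues are the $\lambda_i^{-1}$, and matching essential polynomials; equivalently, this is a supertropical Jacobi complementary-minor identity, the coefficient of $x^{n-k}$ on the left being $\det(A)$ times the sum of the $k\times k$ principal minors of $A^\nabla$, which ghost-surpasses the sum of the complementary $(n-k)\times(n-k)$ principal minors of $A$, i.e.\ the coefficient of $x^{n-k}$ on the right.

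In both parts the passage to the eigenvalue statement is formal. If $g\models_{gs}h$ coefficientwise, then at every position where $g$ has a tangible coefficient $h$ has the same coefficient, and each coefficient of $h$ is $\nu$-dominated by the corresponding one of $g$; hence a corner root of $g$, realized by two dominant tangible monomials, is again a corner root of $h$. Applying this with $g=f_{A^m}(x^m)$ and $h=(f_A(x))^m$, and using $x^m+\mu=(x+\sqrt[m]{\mu})^m$ as tropical functions, shows that the $m$-th root of a corner eigenvalue of $A^m$ is a corner root of $(f_A(x))^m$, hence of $f_A$, hence a corner eigenvalue of $A$; applying it with $g=\det(A)f_{A^\nabla}(x)$ and $h=x^nf_A(x^{-1})$ --- noting that multiplication by the tangible scalar $\det(A)$ does not move corner roots and that $y\mapsto y^{-1}$ is an order-reversing bijection of $\mathcal T$ --- shows that the inverse of a corner eigenvalue of $A^\nabla$ is a corner eigenvalue of $A$. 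I expect the main obstacle to be the middle step in each part, namely establishing the coefficientwise $\models_{gs}$-comparison itself: the elementary determinant estimates point the wrong way, so one genuinely needs the lifting/transfer argument (or, alternatively, an explicit Newton-polygon computation of the coefficients of the two sides together with the concavity of $i\mapsto\alpha_i^\nu$).
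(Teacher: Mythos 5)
The theorem you are asked to prove is not actually proved in this paper: both parts are quoted from external references, namely \cite[Theorem~3.6]{CP} for Part~(1) and \cite[Theorem~4.1]{EP} for Part~(2), so there is no in-paper argument to compare against. On the merits, your elementary observations are correct and correctly oriented. In Part~(1) the binomial expansion indeed gives $(xI+A)^m\models_{gs}x^mI+A^m$, whence $\det\bigl((xI+A)^m\bigr)\models_{gs}f_{A^m}(x^m)$, and iterating Theorem~\ref{det} gives $\det\bigl((xI+A)^m\bigr)\models_{gs}(f_A(x))^m$; in Part~(2), $A(xI+A^\nabla)=xA+I_A\models_{gs}xA+I$ gives that $\det\bigl(A(xI+A^\nabla)\bigr)$ ghost-surpasses both $\det(A)f_{A^\nabla}(x)$ and $x^nf_A(x^{-1})$. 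But, as you yourself observe, both sides of the target relation are thereby placed \emph{below} a common quantity in the $\models_{gs}$-order, which cannot produce a comparison between them; the entire content of the theorem is in the missing bridge.

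The fix you sketch does not supply that bridge. The Newton-polygon/lifting argument over $K=\mathbb C\{\{t\}\}$ controls only the $\nu$-values of coefficients, whereas $\models_{gs}$ is asymmetric and also specifies which coefficients must be tangible (forcing equality) and which may be ghost; two polynomials with the same Newton polygon need not be $\models_{gs}$-comparable in either direction. Worse, tropicalization sends the supertropical $f_A(x)=\det(xI+A)$ and $\trop\bigl(\det(xI-\tilde A)\bigr)$ to $\models_{gs}$-related, not equal, objects (cancellations over $K$ raise valuations, which supertropically is absorbed into ghosts), so lifting gives you two further one-sided chains, again with no way to close them into the claimed $f_{A^m}(x^m)\models_{gs}(f_A(x))^m$. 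Your closing remark that Part~(2) ``is'' a supertropical Jacobi complementary-minor statement is the right instinct, and summing it over $k$-subsets does give precisely the required coefficientwise relation $\det(A)\alpha_k(A^\nabla)\models_{gs}\alpha_{n-k}(A)$; similarly, Part~(1) is at bottom a supertropical compound-matrix identity relating $(A^m)^{[k]}$ to $(A^{[k]})^m$ (note the title of \cite{EP}). But neither of these identities is established in your argument --- they are exactly what the cited references prove and exactly the step your proposal leaves open. The final passage from a coefficientwise $\models_{gs}$ relation to the statement about corner eigenvalues is fine.
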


\section{Dependence of eigenvectors}

A well-known decomposition of $F^n$, where $F$ is a field, is the
decomposition to  eigenspaces of a matrix $A\in \M_n(F)$. In
particular, this decomposition is obtained when the eigenvalues are
distinct since, in the classical case,  eigenspaces of distinct
eigenvalues are linearly independent, which compose a basis for
$F^n$. In the tropical case,  considering that dependence  occurs
when a tropical linear combination
ghost-surpasses~$\overrightarrow{0_{\mathcal F}}$, such a property
need not necessarily hold.

In the upcoming section we analyze the dependence between
eigenvectors, using their definition according to the algorithm
described in Theorem \ref{alg}. We present special cases in which this undesired
dependence is resolved.

\begin{df}\label{difcrit} The matrix $A$ satisfies the \textbf{difference criterion} if the
sets $I_\lambda,$ such that $\lambda$ is a corner root of $f_A$, are
disjoint.
\end{df}

\subsection{Eigenspaces in lower dimensions}

the In the following proposition, we verify   independence of
eigenvectors having distinct eigenvalues, for dimensions  $n  =
2,3$.

\begin{pro}\label{es2-3} Let $A=(a_{i,j})$ be a nonsingular  $n\times n$ matrix, where $n\in\{2,3\}$, with a tangible characteristic polynomial (coefficient-wise) and $n$ distinct eigenvalues. Then the eigenvectors of $A$ are tropically independent. \end{pro}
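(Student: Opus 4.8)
The plan is to prove the statement by a direct case analysis on $n \in \{2,3\}$, using the eigenvectors algorithm (Theorem~\ref{alg}) to write each eigenvector explicitly in terms of entries of $\adj(\lambda I + A)$, and then invoking the criterion from \cite[Theorem~6.5]{STMA} that vectors $v_1,\dots,v_n$ are dependent if and only if $\det(V) \in \mathcal{G} \cup \{0_{\mathcal F}\}$, where $V$ is the matrix with columns $v_1,\dots,v_n$. So the whole problem reduces to showing that $\det(V)$ is tangible. First I would set up notation: since the characteristic polynomial is coefficient-wise tangible and has $n$ distinct eigenvalues $\lambda_1 > \dots > \lambda_n$, the essential polynomial is $f_A^{es}(x) = \prod_{j}(x + \lambda_j)$ with all $n$ monomials essential, so the index sets $\Ind_0 \subsetneq \Ind_1 \subsetneq \dots \subsetneq \Ind_n = [n]$ form a strictly increasing chain, and each $I_{\lambda_j}$ is a singleton, say $I_{\lambda_j} = \{t_j\}$, with $t_1,\dots,t_n$ a permutation of $[n]$. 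Thus the eigenvector for $\lambda_j$ is $\widehat{w_j}$, where $w_j$ is the $t_j$-th column of $\adj(\lambda_j I + A)$.

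The next step is to analyze the structure of these columns. Working in the definite-form normalization (replacing $A$ by a suitable $PA$ via Lemma~\ref{invnab}, which changes neither the eigenvalues nor, up to scaling and the same invertible $P$, the dependence of the eigenvectors), one can assume the relevant diagonal dominance is controlled. For $n=2$ the computation is immediate: $\adj(\lambda_j I + A)$ is a $2\times 2$ matrix whose entries are $\lambda_j + a_{2,2}$, $a_{1,2}$, $a_{2,1}$, $\lambda_j + a_{1,1}$ (suitably placed), and one checks directly that the $2\times 2$ determinant of the two eigenvectors is tangible precisely because $\lambda_1 \ne \lambda_2$ forces one of the two diagonal-type products to strictly dominate. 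For $n=3$ I would expand the columns of $\adj(\lambda_j I + A)$ as sums over permutations passing through the deleted position (using the explicit cycle formula for $\det(A_{j,i})$ recalled in the Preliminaries), and then compute $\det(V)$, tracking which permutation of the $3\times 3$ block attains the maximum. The key point to exploit is that the $t_j$ are \emph{distinct} (this is exactly the $n=2,3$ instance of the difference criterion holding automatically when all eigenvalues are distinct and tangible), so the dominant permutation in $\det(V)$ is forced to pick up one entry from each column in a non-colliding way, and the strict separation $\lambda_1 > \lambda_2 > \lambda_3$ guarantees this dominant term is attained uniquely, hence $\det(V) \in \mathcal T$.

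The main obstacle I anticipate is the $n=3$ bookkeeping: showing that no \emph{second} permutation ties the dominant one in $\det(V)$. This is where low dimension is genuinely used — with only three columns and three distinct singleton index sets $\{t_1\},\{t_2\},\{t_3\}$, the combinatorial possibilities for a tie are limited enough to rule out by hand, whereas in higher dimension (as the paper's pathological Example~\ref{exa} shows) a collision can occur. Concretely, I would argue that any permutation contributing to $\det(V)$ factors through the chain $\Ind_0 \subsetneq \Ind_1 \subsetneq \Ind_2 \subsetneq \Ind_3$, and the value it contributes is $\nu$-equivalent to a product of the form $\prod_j (\text{weight of a sub-permutation on } \Ind_j \setminus \Ind_{j-1})$ times powers of the $\lambda_j$; distinctness of the $\lambda_j$ then separates the top term strictly from all others. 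I would finish by noting that tangibility of $\det(V)$ is preserved under the hat operation $w_j \mapsto \widehat{w_j}$ (since $\widehat{w_j} \cong_\nu w_j$), so the actual eigenvectors produced by the algorithm are independent, completing the proof.
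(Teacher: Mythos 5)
Your overall roadmap matches the paper's: produce each eigenvector from the $t_j$-th column of $\adj(\lambda_j I + A)$ via Theorem~\ref{alg}, assemble them into a matrix $V$ (the paper's $W$), and show $\det(V)\in\mathcal T$ by exhibiting a unique dominant permutation. But two steps in your sketch are genuinely wrong or missing. First, the normalization $A\mapsto PA$ via Lemma~\ref{invnab} does not preserve eigenvalues: left multiplication changes the characteristic polynomial (only a similarity $P^{-1}AP$ is safe). The paper's normalization is merely a permutation similarity so that $\tr(A)=a_{1,1}$; passing to a definite form would alter the very $\lambda_j$ and index sets you need to compare. Second, the assertion that the $I_{\lambda_j}$ are ``automatically'' singletons in dimensions $2,3$ is not free. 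For $n=3$ it is precisely the inclusion $\Ind_1\subseteq\Ind_2$, which the paper has to prove: if $1\notin\Ind_2$, then $a_{1,1}\cdot\alpha$ contributes a permutation on all three indices to $\det(A)$, forcing $\lambda_3=\det(A)/\alpha\geq a_{1,1}=\lambda_1$ and contradicting distinctness of eigenvalues. This is exactly the step that breaks in Example~\ref{exa} for $n=4$, so it cannot be waved through.

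Beyond these, the phrase ``distinctness of the $\lambda_j$ then separates the top term strictly from all others'' is where nearly all of the paper's $n=3$ work actually happens: it carries out an explicit case-by-case enumeration of the cycle types of non-identity permutations in $W$, using relations~\eqref{3-1}--\eqref{3-3} to show each is strictly dominated by the identity permutation's weight $\lambda_1^2(\lambda_1\lambda_2)^2$. Your appeal to ``factoring through the chain'' names the phenomenon but does not supply the bound; without the enumeration, ties in $\det(V)$ are not ruled out.
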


\begin{proof}

$ $

\noindent \underline{The $2\times 2$ case:}

\noindent Let $f_A(x)=x^2+\tr(A)x+\det(A)$ be the characteristic
polynomial of $A$.  If $A$ has two distinct eigenvalues, then these
must be $\lambda_1=\tr(A)$ and $\lambda_2=\frac{\det(A)}{\tr(A)}$.

We must have $\lambda_1>\lambda_2,$  for otherwise either
$$f_A(\lambda_2)=\frac{\det(A)}{\tr(A)}\left(\frac{\det(A)}{\tr(A)}+\tr(A)^\nu\right)=\left(\frac{\det(A)}{\tr(A)}\right)^2\in
\mathcal T,$$ or $\lambda_1=\lambda_2$, which means the polynomial
has one root with multiplicity $2$.

\noindent Without loss of generality, we may assume that $\tr(A)=a_{1,1}$. According to the algorithm, since $I_{\lambda_1}=\{1\}$, $\lambda_1$ has the eigenvector obtained by the tangible value of the \textit{first} column of its eigenmatrix. Since $I_{\lambda_2}=\{2\}$, $\lambda_2$ has the eigenvector obtained by the tangible value of the \textit{second} column of its eigenmatrix.

\noindent The determinant is either:
$$\det(A)=a_{1,1}a_{2,2},\text{ where }a_{1,1}>a_{2,2} \text{ and } a_{1,1}a_{2,2}>a_{1,2}a_{2,1},$$
(and then  the eigenvalues are~$a_{1,1}$ and~$a_{2,2},$) or
$$\det(A)=a_{1,2}a_{2,1},\text{ where }a_{1,1}a_{2,2}<a_{1,2}a_{2,1},$$ (and then  the eigenvalues are $a_{1,1}$ and $\frac{a_{1,2}a_{2,1}}{a_{1,1}},$ satisfying $a_{1,1}>\frac{a_{1,2}a_{2,1}}{a_{1,1}}>a_{2,2}$).

\noindent In both cases,  the first column of $\adj(A+\lambda_1I)$
is $(a_{1,1}, a_{2,1})$ and the second column
of~$\adj(A+\lambda_2I)$ is $(a_{1,2}, a_{1,1})$, which are
tropically independent since $a_{1,1}^2>a_{1,2}a_{2,1}$.

\vskip 0.2 truecm

\noindent \underline{The $3\times 3$ case:}

This case indicates key techniques for understanding and motivating
the general proof on matrices satisfying the difference criterion  in \S 3.3.1.

\noindent Let~$f_A(x)=x^3+\tr(A)x^2+\alpha x+\det(A)$ be the
characteristic polynomial of~$A$, recalling that $\alpha$ is the sum
of the determinants of all of the principle $2\times 2$
sub-matrices. We assign~$\tr(A)$ to be~$a_{1,1}$, i.e.,
\begin{equation}\label{3-1} a_{1,1}>a_{t,t}\ \forall t\ne
1.\end{equation}

\noindent For the determinant we have six permutations of
$S_3$. In order to obtain three distinct eigenvalues, we must have
\begin{equation}\label{3-2}
\lambda_1=\tr(A)>\lambda_2=\frac{\alpha}{\tr(A)}>\lambda_3=\frac{\det(A)}{\alpha},\end{equation}
for otherwise $\exists t,s:\ f_A(\lambda_t)\in \mathcal T$ or
$\lambda_t=\lambda_s$. Thus  \begin{equation}\label{3-3}
\lambda_1\lambda_2=\alpha\ \ \text{and}\ \
\lambda_1\lambda_2\lambda_3=\det(A).\end{equation} As a result,
$\Ind_1\subseteq\Ind_2$; otherwise, $a_{1,1}$ together with $\alpha$
yields a permutation whose weight is dominated by $\det(A)$, and we
get $\lambda_1=a_{1,1}<\frac{\det(A)\cdot a_{1,1}}{\alpha\cdot
a_{1,1}}=\lambda_3,$  contrary to~\eqref{3-2}. \vskip 0.2 truecm

Therefore,
$$\begin{cases}\,I_{\lambda_1}=\{1\}\setminus\emptyset=\{1\}\\
I_{\lambda_2}=\{1,j\}\setminus\{1\}=\{j\},\\
I_{\lambda_3}=\{1,j,k\}\setminus\{1,j\}=\{k\},\end{cases}$$where
$1,j,k$ are distinct. Without loss of generality, we may take $j=2$
and~$k=3$, and obtain the  eigenmatrices:

$\ \ \ \ A+\lambda_1I=\left(
\begin{array}{ccc}
\lambda_1 & a_{1,2} & a_{1,3}\\
a_{2,1}  & \lambda_1 & a_{2,3}\\
a_{3,1}  & a_{3,2}& \lambda_1
\end{array}
\right),\ \text{since $\tr(A)=a_{1,1}>a_{t,t},\ \ \forall t\ne 1$,
by~\eqref{3-1}},$ \vskip 0.25 truecm

$\ \ \ \ A+\lambda_2I=\left(
\begin{array}{ccc}
\lambda_1 & a_{1,2} & a_{1,3}\\
a_{2,1}  & \lambda_2 & a_{2,3}\\
a_{3,1}  & a_{3,2}& \lambda_2
\end{array}
\right),\ \text{since
$\underbrace{\overbrace{\frac{\alpha}{\tr(A)\cdot a_{t,t}}}}_{\geq
1}a_{t,t}\geq a_{t,t}$},$

\noindent because~$\tr(A)\cdot a_{t,t}$ is a summand of~$\alpha$,
$\forall t\ne 1$, and \vskip 0.25 truecm

$\ \ \ \ A+\lambda_3I=\left(
\begin{array}{ccc}
\lambda_1 & a_{1,2} & a_{1,3}\\
a_{2,1}  & \beta & a_{2,3}\\
a_{3,1}  & a_{3,2}& \lambda_3
\end{array}
\right),\ \text{since $\underbrace{\overbrace{\frac{\det(A) }
{\alpha\cdot a_{3,3}}}}_{\geq 1}a_{3,3}\geq a_{3,3}$},$

\noindent where $\beta=\max\{a_{2,2},\lambda_3\}$,
since~$\alpha\cdot a_{3,3}$ is a summand in~$\det(A)$. \vskip 0.2
truecm

Recalling the algorithm in Theorem \ref{alg}, we let $W$ be the
matrix with the (tangible value of the) eigenvectors for its columns

$$W=\left(
\begin{array}{ccc}
\lambda_1^2                                           & a_{1,2}\lambda_2+a_{1,3}a_{3,2}                    & a_{1,3}\beta+a_{1,2}a_{2,3}\\
 &&\\
a_{2,1}\lambda_1+a_{2,3}a_{3,1}          & \lambda_1\lambda_2                                          & a_{2,3}\lambda_1+a_{2,1}a_{1,3}\\
&&\\
a_{3,1}\lambda_1+a_{3,2}a_{2,1}          & a_{3,2}\lambda_1+a_{3,1}a_{1,2}                     & \lambda_1\beta+a_{1,2}a_{2,1}
\end{array}
\right).$$
We get~$W_{3,3}=\lambda_1\lambda_2$, since

$\text{if }\alpha=\lambda_1\lambda_2=a_{1,2}a_{2,1}\text{ then }\lambda_1a_{2,2}< \alpha,\ \lambda_1\lambda_3< \alpha\Rightarrow\lambda_1\beta+a_{1,2}a_{2,1} =\lambda_1\lambda_2,\text{ and}$

$\text{if }\alpha=\lambda_1\lambda_2=a_{1,1}a_{2,2}\text{ then }\lambda_3<\frac{a_{2,2}}{a_{1,1}}a_{1,1}=\lambda_2\Rightarrow\beta=a_{2,2},\ \lambda_1a_{2,2}+a_{1,2}a_{2,1}=\lambda_1\lambda_2.$

\noindent Due to  relations \eqref{3-1}-\eqref{3-3}, all non-identity permutations in $\det(W)$:

$\begin{cases}
\lambda_1^4a_{2,3}a_{3,2}=\lambda_1^2\lambda_1a_{1,1}a_{2,3}a_{3,2}\leq\lambda_1^2\lambda_1\lambda_1\lambda_2\lambda_3 \ \ \ \text{ and}& \lambda_1^2\lambda_2\lambda_3a_{1,3}a_{3,1}<\lambda_1^2\lambda_1\lambda_2a_{1,3}a_{3,1},\\

\lambda_1\lambda_m\lambda_ra_{i,j}a_{j,l}a_{l,i}\leq \lambda_1^2\lambda_2(\lambda_1\lambda_2\lambda_3)&,\ i,j,l\text{ distinct},\\

\lambda_m\lambda_r(a_{i,j}a_{j,i})(a_{i,l}a_{l,i})\leq\lambda_1^2(a_{i,j}a_{j,i})(a_{i,l}a_{l,i})&,\ i,j,l\text{ distinct},\\

\lambda_m(a_{i,j}a_{j,i})(a_{i,l}a_{l,t}a_{t,i})\leq\lambda_1(\lambda_1\lambda_2)(\lambda_1\lambda_2\lambda_3)&,\ j\ne i,t,l\text{ distinct},\\

(a_{i,j}a_{j,i})(a_{k,l}a_{l,k})(a_{t,s}a_{s,t})\leq(\lambda_1\lambda_2)^3&,\ i\ne j, k\ne l, t\ne s,\\

\text{and }\ \ (a_{i,j}a_{j,k}a_{k,i})(a_{l,t}a_{t,s}a_{s,l})\leq(\lambda_1\lambda_2\lambda_3)^2&,\ i,j,k\text{ distinct},\ s,t,l\text{ distinct},\end{cases}$

\noindent are strictly dominated by $\lambda_1^2(\lambda_1\lambda_2)(\lambda_1\lambda_2)$.

\end{proof}

We further study this property in the generalization proved in
Theorem \ref{es-n}. The cases  in Step~3 of its proof are
demonstrated above.

\subsection{The pathology  appears}\label{TPA}

We follow Example \ref{exa}, introduced in ~\cite{STMA2}, to show
how independence of eigenspaces might fail for dimensions higher
then $3$,  due to the increased variety of indices. While applying
the eigenvectors-algorithm, we utilize a supertropical analog of
classical Gaussian elimination, treating the ghosts as
``zero-elements''. This illustrative example will provide the
motivation for  Theorem \ref{es-n}, Conjecture \ref{esnab1} and
Conjecture \ref{esnab2}, generalizing the connection of the index
sets   to the dependence of the eigenvectors.

\begin{exa}\label{exa}

Let $$A=
\left(
\begin{array}{cccc}
10 & 10 & 9 & - \\
9  & 1 & - & - \\
-  & - & - & 9\\
9 & - & - & -
\end{array}
\right).$$

\noindent The characteristic polynomial of $A$ is
$$f_A(x)=x^4+10x^3+19x^2+27x+28,$$ obtained from the permutations $(1),\ (1\
2),\ (1\ 3\ 4),\ (1\ 3\ 4)(2),$ respectively. Therefore,
\begin{equation}\label{ind}\begin{cases}I_{\lambda_1}=\{1\}\setminus\emptyset=\{1\},\\ I_{\lambda_2}=\{1,2\}\setminus\{1\}=\{2\},\\ I_{\lambda_3}=\{1,3,4\}\setminus\{1,2\}=\{3,4\},\\ I_{\lambda_4}=\{1,2,3,4\}\setminus\{1,3,4\}=\{2\}\end{cases}\end{equation} where $\lambda_1=10,\ \lambda_2=9,\ \lambda_3=8\
\text{and}\ \lambda_4=1,$ are the eigenvalues of $A$. As we saw in \S 3.1, the overlap of the
second and fourth sets cannot occur in lower dimensions.

\end{exa}

\noindent The eigenmatrices and eigenvectors are as follows:

\medskip
 \noindent \underline{For $\lambda_1:$}
$$A+10I=
\left(
\begin{array}{cccc}
10^\nu & 10 & 9 & - \\
9  & 10 & - & - \\
-  & - & 10 & 9\\
9 & - & - & 10
\end{array}
\right),$$ and the tangible value of the first column of its adjoint
is $$v_1=\left(30,29,28,29\right)=28\left(2,1,0,1\right).$$ This can
also be obtained when multiplying the eigenmatrix by $$E_{4^{th}\
row+1\cdot 3^{rd}\ row}^2E_{4^{th}\ row+1\cdot 2^{nd}\
row}E_{2^{nd}\ row+1^{st}\ row}E_{1,4}$$ on the left:
$$\left(
\begin{array}{cccc}
9 & - & - & 10 \\
9^\nu  & 10 & - & 10 \\
-  & - & 10 & 9\\
10^\nu & 10^\nu & 12^\nu & 11^\nu
\end{array}
\right),$$ and solving the tropically linear system
$$\begin{cases}
9x+10w\in \mathcal{G},\\
 10y+10w\in \mathcal{G},\\
 10z+9w\in \mathcal{G},
\end{cases}$$ which yields $\left(11,10,9,10\right)=9\left(2,1,0,1\right),$ a multiple of $v_1$.

\medskip

\noindent \underline{For $\lambda_2:$}
$$A+9I=
\left(
\begin{array}{cccc}
10 & 10 & 9 & - \\
9  & 9 & - & - \\
-  & - & 9 & 9\\
9 & - & - & 9
\end{array}
\right),$$ and the tangible value of the second column of its adjoint
is $$v_2=\left(28,28,28,28\right)=28\left(0,0,0,0\right).$$ This can
also be obtained when multiplying the eigenmatrix by $$E_{4^{th}\
row+2\cdot 3^{rd}\ row} E_{4^{th}\ row+1\cdot 2^{nd}\ row}E_{2^{nd}\
row+1^{st}\ row}E_{1,4}$$ on the left:
$$\left(
\begin{array}{cccc}
9 & - & - & 9 \\
9^\nu  & 9 & - & 9 \\
-  & - & 10 & 9\\
10^\nu & 10^\nu & 9^\nu & 9^\nu
\end{array}
\right),$$ and solving the tropically linear system
$$\begin{cases}
9x+9w\in \mathcal{G},\\
 9y+9w\in \mathcal{G},\\
 9z+9w\in \mathcal{G},
\end{cases}$$ which yields $\left(0,0,0,0\right),$ a multiple of $v_2$.

\medskip

\noindent \underline{For $\lambda_3:$}
$$A+8I=
\left(
\begin{array}{cccc}
10 & 10 & 9 & - \\
9  & 8 & - & - \\
-  & - & 8 & 9\\
9 & - & - & 8
\end{array}
\right),$$ and the tangible value of the third column of its adjoint
is $$v_3=\left(25,26,27,26\right)=25\left(0,1,2,1\right).$$ This can
also be obtained when multiplying the eigenmatrix by $$E_{4^{th}\
row+1\cdot 3^{rd}\ row}E_{4^{th}\ row+2\cdot 2^{nd}\ row}E_{2^{nd}\
row+1^{st}\ row}E_{1,4}$$ on the left:
$$\left(
\begin{array}{cccc}
9 & - & - & 8 \\
9^\nu  & 8 & - & 8 \\
-  & - & 8 & 9\\
11^\nu & 10^\nu & 9^\nu & 10^\nu
\end{array}
\right),$$ and solving the tropically linear system
$$\begin{cases}
9x+8w\in \mathcal{G},\\
 8y+8w\in \mathcal{G},\\
 8z+9w\in \mathcal{G},
\end{cases}$$ which yields $\left(7,8,9,8\right)=7\left(0,1,2,1\right),$ a multiple of $v_3$.

\medskip

\noindent  \underline{For $\lambda_4$}
$$A+1I=
\left(
\begin{array}{cccc}
10 & 10 & 9 & - \\
9  & 1^\nu & - & - \\
-  & - & 1 & 9\\
9 & - & - & 1
\end{array}
\right),$$ and the tangible value of the second column of its adjoint
is $$v_4=\left(12,27,28,20\right)=12\left(0,15,16,8\right).$$ This can
also be obtained when multiplying the eigenmatrix by
$$E_{4^{th}row+(-1)\cdot 1^{st}\ row}E_{4^{th}row+\cdot 2^{nd}\
row}E_{2^{nd}+(-1)\cdot 1^{st}\ row}$$ on the left:
$$\left(
\begin{array}{cccc}
10 & 10 & 9 & - \\
9^\nu  & 9 & 8 & - \\
-  & - & 1 & 9\\
9^\nu & 9^\nu & 8^\nu & 1^\nu
\end{array}
\right),$$ and solving the tropically linear system
$$\begin{cases}
10x+10y+9z\in \mathcal{G},\\
 9y+8z\in \mathcal{G},\\
 1z+9w\in \mathcal{G},
\end{cases}$$ which  yields~$\left(x,8,9,1\right),$ where~$x\leq 8$.

\medskip

\noindent From the fourth position of~$Av\models_{gs} \lambda v,$ we
get~$9x\models_{gs} 2$ which implies~$x=-7$. Thus the eigenvector
is~$(-7,8,9,1)=-7(0,15,16,8),$ a multiple of~$v_4$.

Next, we examine the dependence of  the eigenvectors, using the
matrix $W$ having these vectors for its columns:
$$W=
\left(
\begin{array}{cccc}
30 & 28 & 25 & 12 \\
29  & 28 & 26 & 27 \\
28  & 28 & 27 & 28\\
29 & 28 & 26 & 20
\end{array}
\right).$$ The determinant of~$W$ is $112^\nu$ and is obtained by
the permutations~$(1)(2)(3\ 4)$ and $(1)(2\ 4)(3).$ One can  see
that the ghost part of the product is  attained in the principal
sub-matrix $\{2,3,4\}\times\{2,3,4\}$, where the pathology of the
index sets occurs. We rewrite~$W$ using   the eigenvalues and the
entries of $A=(a_{i,j})$, in order to  understand this dependence:
$$W=
\left(
\begin{array}{ccccccc}
\lambda_1^3 && a_{1,2}\lambda_2^2 && \lambda_3^2a_{1,3} && \lambda_4^2a_{1,2} \\
&&&\\
\lambda_1^2a_{2,1}  && \lambda_1 \lambda_2^2 && \lambda_3 a_{2,1}a_{1,3} && a_{1,3}a_{3,4}a_{4,1} \\
&&&\\
\lambda_1 a_{3,4}a_{4,1}  && a_{3,4}a_{4,1}a_{1,2} && \lambda_3a_{1,2}a_{2,1} && a_{3,4}a_{4,1}a_{1,2}\\
&&&\\
\lambda_1^2a_{4,1} && \lambda_2 a_{4,1}a_{1,2} && \lambda_3 a_{4,1}a_{1,3} && \lambda_4a_{4,1} a_{1,2}
\end{array}
\right).$$

The determinant is attained by $$\lambda_1^3 (\lambda_1\lambda_2^2 )
(a_{3,4} a_{4,1}a_{1,2}) (\lambda_3  a_{4,1}a_{1,3})\text{ and
}\lambda_1^3(a_{1,3}a_{3,4}a_{4,1})( \lambda_3a_{1,2}a_{2,1}
)(\lambda_2 a_{4,1}a_{1,2}),$$ where all elements are identical,
and~${\lambda_1\lambda_2}={a_{1,2}a_{2,1}}.$ That is, the ghost
determinant is not an occasional outcome of repeated values  (such
as~$9,10$ in the entries of~$A$), or some relations between
coefficients. The singularity which we encounter is systematic:
  $$\lambda_1^3 (\underbrace{\lambda_1 \lambda_2^2}_{\alpha\lambda_2})
(a_{4,1}a_{1,2}a_{3,4}) (a_{4,1}a_{1,3}\lambda_3 )=\lambda_1^3[\lambda_2
a_{4,1}a_{1,2}][a_{3,4}a_{4,1}a_{1,3}][a_{1,2}a_{2,1}\lambda_3 ].$$

\subsection{Resolving the pathology}

In this section we offer sufficient conditions for independence, and
present two conjectures on the eigenvectors of the quasi-inverse of
a matrix.

\subsubsection{The resolution by means of disjoint index sets}

The intersection of the $\{I_\lambda\}$ causes the eigenvector
dependency seen in the previous section. This pathology will be
resolved in the following theorem using disjoint $\{I_\lambda\}$, in
which we show that it is a Zariski-closed condition.

\begin{thm}\label{es-n} Let $A=(a_{i,j})$ be a   nonsingular $n\times n$ matrix, with tangible characteristic polynomial (coefficient-wise) and $n$ distinct eigenvalues. If $A$ satisfies
the   difference criterion, then the eigenvectors of $A$ are
tropically independent. \end{thm}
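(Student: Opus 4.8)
The plan is to reduce everything to a single determinant computation. By \cite[Theorem~6.5]{STMA}, the eigenvectors $v_1,\dots,v_n$ produced by the algorithm of Theorem~\ref{alg} are tropically independent if and only if the matrix $W$ with columns $v_1,\dots,v_n$ has $\det(W)\in\mathcal T$, i.e. $\det(W)$ is attained by a \emph{unique} dominant permutation. So the whole task is to pin down that permutation and show every other permutation of $W$ is strictly dominated.

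First I would extract the combinatorial skeleton forced by the hypotheses. Since $f_A$ has tangible coefficients and $n$ distinct (hence corner) roots, all its monomials are essential; writing $\alpha_k$ for the coefficient of $x^{n-k}$ (so $\alpha_0=1$) and $\lambda_1>\dots>\lambda_n$ for the eigenvalues, one gets $\lambda_k=\alpha_k/\alpha_{k-1}$ and $\alpha_k=\lambda_1\cdots\lambda_k$, with $\Ind_k$ a well-defined $k$-subset of $[n]$. Each $I_{\lambda_k}=\Ind_k\setminus\Ind_{k-1}$ is nonempty (cardinality $k$ versus $k-1$), so under the difference criterion we have $n$ pairwise disjoint nonempty subsets of $[n]$; by pigeonhole each $I_{\lambda_k}$ is a singleton $\{t_k\}$ and $\{t_1,\dots,t_n\}=[n]$, and a cardinality count upgrades this to the nested chain $\Ind_k=\Ind_{k-1}\cup\{t_k\}$, $\Ind_1\subsetneq\dots\subsetneq\Ind_n=[n]$. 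Conjugating $A$ by a permutation matrix (harmless for the hypotheses, the eigenvalues and independence, by Lemma~\ref{invnab}), I may assume $I_{\lambda_k}=\{k\}$ and $\Ind_k=[k]$, so $\lambda_1=a_{1,1}=\tr(A)$. Comparing, for each $i$, the permutation that is dominant on $\Ind_{i-1}=[i-1]$ and fixes $i$ (a permutation on the $i$-set $\Ind_i$) against $\alpha_i$ gives $a_{i,i}\le\lambda_i$; hence the diagonal of the eigenmatrix $\lambda_kI+A$ in position $i$ is $\max(a_{i,i},\lambda_k)$, which equals $\lambda_k$ for $i\ge k$ and lies between $\lambda_k$ and $\lambda_i$ for $i<k$.

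Next I would compute the diagonal of $W$. Since $W_{k,k}\cong_\nu\det\big((\lambda_kI+A)_{k,k}\big)=\det(\lambda_kI_{n-1}+A_{k,k})$ is the characteristic polynomial of $A_{k,k}$ evaluated at $\lambda_k$, tropically it equals $\max_{0\le j\le n-1}\lambda_k^{\,n-1-j}\alpha_j(A_{k,k})$, where $\alpha_j(A_{k,k})$ is the sum of the $j\times j$ principal minors of $A$ avoiding index $k$. Because $\Ind_j=[j]$, we have $\alpha_j(A_{k,k})=\alpha_j$ for $j\le k-1$ and $\alpha_j(A_{k,k})<\alpha_j$ strictly for $j\ge k$ (there $[j]\ni k$, and $\alpha_j$ is attained uniquely); moreover $\lambda_k^{\,n-1-j}\alpha_j$ is strictly increasing in $j$ for $j\le k-1$ (successive ratio $\lambda_j/\lambda_k>1$) while $\lambda_k^{\,n-1-j}\alpha_j\le\lambda_k^{\,n-k}\alpha_{k-1}$ for $j\ge k$. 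So the maximum occurs only at $j=k-1$, giving $W_{k,k}\cong_\nu\lambda_k^{\,n-k}\alpha_{k-1}$, and the identity permutation of $W$ has weight
$$M:=\prod_{k=1}^n\lambda_k^{\,n-k}\alpha_{k-1}\ \cong_\nu\ \prod_{i=1}^{n-1}\lambda_i^{\,2(n-i)}.$$

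It then remains to show every other permutation of $W$ is strictly $\nu$-smaller than $M$ — this is the crux, and where the difference criterion is indispensable. For $k\ne l$ I would expand the off-diagonal entry $W_{k,l}\cong_\nu\det\big((\lambda_lI+A)_{l,k}\big)$ by the cycle through the deleted position (the formula following Definition~\ref{adj}), as a sum over $\sigma$ with $\sigma(l)=k$ of (a path of entries of $A$ running from $k$ to $l$) times (the weight $C_\sigma$ in $\lambda_lI+A$ of the remaining cycles and fixed points). Then $\prod_l W_{\pi(l),l}$ expands into a sum of products of cycle-weights of $A$, each bounded by the relevant $\alpha_j=\lambda_1\cdots\lambda_j$ (for a cycle meeting $j$ indices), and fixed-point factors, each bounded by a single $\lambda_l$; a bookkeeping argument — the one carried out for $n=3$ in the proof of Proposition~\ref{es2-3}, of which the general case is the straightforward extension — shows every such product is $\le M$, with equality forcing all paths to be trivial, i.e. $\pi=\mathrm{id}$. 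The mechanism is that when $\pi\ne\mathrm{id}$ some path must cross between the now-distinct blocks $\{1\},\dots,\{n\}$, which introduces a ratio $\lambda_i/\lambda_l<1$ with $i\ne l$ and makes the loss \emph{strict}; this is exactly what fails in Example~\ref{exa}, where the coincidence $I_{\lambda_2}=I_{\lambda_4}$ manufactures a competing permutation of equal weight and turns $\det(W)$ into a ghost. Hence $\det(W)=M\in\mathcal T$ and the eigenvectors are tropically independent. The main obstacle is precisely this last step: turning the expansion of $\prod_l W_{\pi(l),l}$ into one clean inequality and verifying strict loss for every non-identity $\pi$. Each individual bound is of the familiar ``product of cycle-weights $\le$ product of $\alpha_j$'s'' type already met in dimensions $2$ and $3$; the difficulty is organizational — managing the case analysis uniformly in $n$ and isolating the exact point at which disjointness of the $I_\lambda$ forbids a tie.
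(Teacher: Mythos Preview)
Your outline is essentially the paper's own proof: reduce to showing $\det(W)\in\mathcal T$, use the difference criterion to force the nested chain $\Ind_k=[k]$, compute $W_{k,k}=\alpha_{k-1}\lambda_k^{\,n-k}$, and then argue that the identity permutation strictly dominates every other permutation of $W$. Your Steps~1 and~2 are correct; in fact your pigeonhole/cardinality argument for Step~1 is a little slicker than the paper's inductive contradiction. (Minor quibble: the ``successive ratio'' in your Step~2 is $\lambda_{j+1}/\lambda_k$, not $\lambda_j/\lambda_k$.)

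The genuine gap is exactly where you flag it: Step~3. You describe expanding each off-diagonal $W_{\pi(l),l}$ via the adjoint formula and bounding the resulting paths and cycles, but you do not carry out the bookkeeping, and your one-line mechanism (``some path must cross between the now-distinct blocks $\{1\},\dots,\{n\}$, which introduces a ratio $\lambda_i/\lambda_l<1$'') is not yet an argument. The paper closes this step with a structural observation that avoids tracking paths altogether: every term contributing to $W_\pi$ factors as $W_\pi^{(\lambda)}\cdot C$, where $W_\pi^{(\lambda)}$ is a subproduct of the eigenvalue factors appearing in $W_{\Id}$, and $C$ is a product of genuine $A$-cycles (the ``paths'' from your expansion close up once you multiply over a cycle of $\pi$). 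Since an $A$-cycle of length $d$ is bounded by $\alpha_d=\lambda_1\cdots\lambda_d$, one already has $W_\pi\le W_{\Id}$; strictness then falls out of a short four-case check (some cycle of $C$ has length $n$; two cycles share a length; a cycle of length $d$ misses some index of $[d]$; or else $W_\pi^{(\lambda)}$ contains a $\lambda_j$ with $j\ne1$), each of which forces a strict inequality against the corresponding factor of $W_{\Id}$. That decomposition is the missing idea you should supply in place of the ``organizational'' case analysis you anticipate.
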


\begin{proof}
\noindent
Let~$f_A(x)=\sum_{i=0}^n\alpha_ix^{n-i}\in\mathcal{T}[x]$
be the characteristic polynomial of $A$, which means
$\alpha_0=0,\ \alpha_1=\tr(A),\ \alpha_n=\det(A).$ Without loss of generality,~$\tr(A)=a_{1,1}$, i.e.,
\begin{equation}\lambda_1=\label{n-1}a_{1,1}>a_{t,t}\ \forall t\ne 1.\end{equation}
In order to get $n$ distinct eigenvalues, we must have
$f_A(x)=f^{es}_A(x)$, or equivalently
\begin{equation}\label{n-2}\lambda_1=\tr(A)>\lambda_2=\frac{\alpha_2}{\tr(A)}>\lambda_3=\frac{\alpha_3}{\alpha_2}>\dots>\lambda_{n-1}=\frac{\alpha_{n-1}}{\alpha_{n-2}}>\lambda_n=\frac{\det(A)}{\alpha_{n-1}},\end{equation}
where $\{\lambda_l\}_{l\in[n]}$ are the corner-roots of $f_A$.
Otherwise,~$\exists t,s$ such that~$\ f_A(\lambda_t)\in \mathcal T$
or~$\lambda_t=\lambda_s$,  contrary to hypothesis. In particular, $\tr(A)=\lambda_1$ and $\Ind_1=\{1\}.$

\vskip 0.25 truecm

We need to show that $\det(W) \in\mathcal{T},$ where~$W$ is the
matrix of eigenvectors. This is achieved in three steps:
\begin{enumerate}
\item For every~$k\in[n]$, $\Ind_k\subseteq\Ind_{k+1}\ \forall k$, and therefore $I_{\lambda_k}=\{k\}$.

\item For every~$k\in[n]$,
$W_{k,k}=\lambda_1\cdots\lambda_{k-1}\lambda_k^{n-k}$.
\item Finally, $\det(W)=\prod_{k\in[n]}W_{k,k}\in\mathcal{T},$ as desired.
\end{enumerate}

\vskip 0.25 truecm

\noindent \textbf{Step (1).}  A straightforward application of
\eqref{n-2} yields
\begin{equation}\label{n-3}\lambda_1\cdots\lambda_k=\alpha_k\in\mathcal{T}\ \text{and}\ \{1\}=\Ind_1\subseteq\Ind_k,\ \forall k\geq 1.\end{equation}  Otherwise,  $a_{1,1}\cdot \alpha_{k-1}$ would yield a permutation on $k$ indices,
dominated by $\alpha_k$:
$$\lambda_1=a_{1,1}\leq\ \underbrace{\overbrace{\frac{\alpha_{_{k}}}{\alpha_{_{k-1}}\cdot a_{1,1}}}}_{\geq 1_R}\ \cdot a_{1,1}=\lambda_k,\ \text{  contradicting \eqref{n-2}.}$$

Let $\Ind_k=\{1,j_2,...,j_k\}$. Since $\Ind_0=\emptyset,$ there
exists $i\leq k:\ j_s\in I_{\lambda_i},\ \forall s\in\{2,...,k\}.$
Assume that $\Ind_{l-1}\subseteq\Ind_{l}$ holds  through  $l = k$,
and then fails for $k+1$. That is,
$$\forall l\leq k\ \ \Ind_{l-1}\subseteq \Ind_l\ \text{ and }\ \exists s\in\{2,..., k\}:\ j_s\notin \Ind_{k+1}.$$
However, since~$\Ind_n=[n]$,~$j_s\in\Ind_n$. We define~$t$ to be the
minimal index~$k<t<n$ such that $j_s\in \Ind_t\text{ but }
j_s\notin\Ind_{t-1}.$ That is $j_s\in I_{\lambda_t}\cap
I_{\lambda_i}$ for some $i,t:\ i<k<t$, contradicting  the difference
criterion. Therefore $\Ind_k\subseteq\Ind_{k+1},\ \forall k$. \vskip
0.25 truecm

\noindent \textbf{Step (2).}   Up to some permutation, we may require w.l.g.~that~$j_k=k,\ \ \forall k\in[n]$. That is, $I_{\lambda_k}=\{k\}\ \forall k\in[n]$,
\begin{equation}\label{m-2}a_{1,1}>\lambda_k=\ \underbrace{\overbrace{\frac{\alpha_k}{\alpha_{k-1}\cdot a_{t,t}}}}_{\geq 1_R}\ \cdot a_{t,t}\geq a_{t,t},\ \ \forall t\geq k,\ \forall k>1,\end{equation}
with equality only when $k=t$, and
\begin{equation}\label{m-3}\beta_{k,t}=\max\{\lambda_k,a_{t,t}\},\ \ \forall t< k.\end{equation}
 Thus, the entries of the $k^{th}$ eigenmatrix $A+\lambda_kI=\big(b^{(k)}_{i,j}\big)$ are given by
\begin{equation}\label{diagonal}b^{(k)}_{i,j}=\begin{cases}\lambda_1&,\  i=j=1\\\beta_{k,i}&,\  1<i=j<k\\ \lambda_k&,\ i=j\geq k\\a_{i,j}&,\  i\ne j\end{cases}.\end{equation}
 (For example, for $k=2$ and $k=3$ we get
$$\left(\begin{array}{cccc}
\lambda_1&&a_{i,j}:i<j& \\
   &\lambda_2&&\\
&&\ddots&\\
   &a_{i,j}:i>j & &  \lambda_2
\end{array}
\right),\
\left(\begin{array}{ccccc}
\lambda_1& &a_{i,j}:i<j&\\
   &\beta_{3,2}&&&\\
&&\lambda_3&&\\
&&&\ddots & \\
&a_{i,j}:i>j&&  & \lambda_3
\end{array}
\right),$$ respectively, where $a_{i,j}$ indicates that the off-diagonal entries are identical to those of~$A$.)

\vskip 0.3 truecm

Let  $\adj(A)=(a'_{i,j})$, $W=(w_{i,j})$ be the matrix with the (tangible value of the) eigenvectors for its columns, and notice that $w_{k,k}=\adj(A+\lambda_k I)_{k,k}.$

\begin{itemize}
\item On  one hand, by~\eqref{diagonal} $(A+\lambda_k I)_{k,k}=\lambda_k$. By ~\cite[Theorem 2.8]{STMA2}, \begin{equation}\label{attain}\big((A+\lambda_k I)\adj(A+\lambda_k I)\big)_{k,k}=\det(A+\lambda_k I)=f_A(\lambda_k)\in \mathcal{G},\end{equation} where $f_A(\lambda_k)=\alpha_k\lambda_k^{n-k}+\alpha_{k-1}\lambda_k^{n-k+1}=(\lambda_1\cdots\lambda_{k-1}\lambda_k^{n-k+1})^\nu,$ as $\lambda_k$ is the $k^{th}$ corner root of the polynomial of distinct coefficients $f_A$. Since every summand in~\eqref{attain} is dominated by this expression, we get
$$\adj(A+\lambda_k I)_{k,k}=w_{k,k}\leq {\lambda_1\cdots\lambda_{k-1}\lambda_k^{n-k}}.$$

\item On the other hand,~$\lambda_k^{n-k}\det(M)$ is a summand in~$\adj(A+\lambda_k I)_{k,k},$ where~$M$ is the~$(k-1)\times (k-1)$-principal sub-matrix of~$A+\lambda_k I,$ obtained by rows and  columns~$[k-1]$.  Since~$A\leq A+\lambda_kI$ entry-wise (and in particular for~$M$ and its corresponding principal sub-matrix in~$A$), we get~$\det(M)\geq_{\nu}\alpha_{k-1}=\lambda_1\cdots \lambda_{k-1}.$ Thus,
$\adj(A+\lambda_k I)_{k,k}=w_{k,k}\geq\lambda_1\cdots\lambda_{k-1}\lambda_k^{n-k}.$\end{itemize}

 As a result,
\begin{equation}\label{W}
w_{k,k}=\lambda_1\cdots\lambda_{k-1}\lambda_k^{n-k},\ \forall k\in[n].
\end{equation}

\vskip 0.25 truecm

\noindent \textbf{Step (3).} Notice that
\begin{equation}\label{detW}\prod_{k\in[n]}w_{k,k}=\prod_{k\in[n]}
\lambda_1\cdots\lambda_{k-1}\lambda_k^{n-k}=\prod_{k\in[n-1]}
\lambda_1\cdots\lambda_{k-1}\lambda_{k}^{n-k+1}.\end{equation} We
claim that any other permutation in~$W$ is strictly dominated by the
term in~\eqref{detW}.

Let~$X=(x_{i,j})$ be an~$n\times n$ matrix. For~$\pi\in S_n$ denote~$X_\pi=\prod_{i\in[n]}x_{i,\pi(i)}$, and its cycles are referred to as~$X$-cycles. An~$X$-cycle of length~$d$ is said to be  an~$X^{(d)}$-cycle. Using~\eqref{diagonal}, we denote by~$W_\pi^{(\lambda)}$ the product of  eigenvalues of~$A$ in~$W_\pi$. For example~$W_\Id^{(\lambda)}=W_\Id.$

For every~$\pi\ne\Id,\ \ W_\pi=W_\pi^{(\lambda)}\cdot C$, where~$C$ is a product of~$A-$cycles, and~$W_\pi^{(\lambda)}$ is a product in~$W_\Id$. A cycle~$c\in C$ is  an~$X^{(d)}$-cycle for some~$d\in[n]$, and is dominated by~$\lambda_1\cdots \lambda_d$, which is strictly dominated by~$\lambda_1\cdots \lambda_{d-2}\lambda_{d-1}^2<\lambda_1\cdots \lambda_{d-3}\lambda_{d-2}^3<...<\lambda_1^d$. Therefore,~$W_\pi\leq W_\Id$, and we show strict dominance. The product~$W_\pi=W_\pi^{(\lambda)}\cdot C$ satisfies at least one of the following cases:

\begin{itemize}
\item $C$ includes an~$A^{(n)}-$cycle, dominated by~$\lambda_1\cdots \lambda_n$, which is strictly dominated by~$\lambda_1\cdots \lambda_{n-2}\lambda_{n-1}^2$ in~\eqref{detW}.

\item $C$ includes two different~$A^{(d)}-$cycles, at least one is strictly dominated by~$\lambda_1\cdots \lambda_{d}$ in~\eqref{detW}.

\item $C$ includes an~$A^{(d)}-$cycle which does not act on some index of~$[d]$, making it strictly dominated by~$\lambda_1\cdots \lambda_{d}$ in~\eqref{detW}.

\item $C=\prod c,$ s.t.~$c$ is an~$A^{(d_c)}-$cycle on indices~$[d_c]$. Then,  $$W_\pi^{(\lambda)}=\prod_{j\in J\subseteq [n]}\lambda_j\ \ \Rightarrow\ \ \exists j\in J:\ j\ne 1,$$
whereby~$W_\pi^{(\lambda)}$ is strictly dominated by~$\lambda_1^m$
in~\eqref{detW}, for~$m=|J|$.
\end{itemize}
Since at least one term is strictly dominated, and the rest are dominated, the assertion  follows.

\end{proof}

\subsubsection{The resolution by means of the quasi-inverse }

In view of the results in~\cite{EP},~\cite{PI&CP} and~\cite{MPDM},
one can conclude that  quasi-inverse matrices play an important role
in formulating properties of matrices. These studies lead us to the
following two conjectures, based on a    further examination of
Example~\ref{exa}.

\begin{con}\label{esnab1} Let $A$ be a nonsingular matrix with $n$ distinct eigenvalues. If the eigenvectors of $A$ are dependent, then
\begin{enumerate}
\item Recalling Theorem~\ref{CPIP},~$\det(A)f_{A^\nabla}(x)$ strictly ghost-surpasses~$x^nf_A(x^{-1})$.
\vskip 0.2 truecm

\item The matrix~$A^\nabla$ has fewer distinct eigenvalues than $A$, when~$f_{A^\nabla}\ne f^{^{es}}_{A^\nabla}.$
\vskip 0.2 truecm

\item Moreover, the eigenvectors of~$A^\nabla$ are independent.
\end{enumerate}
\end{con}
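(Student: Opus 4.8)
The plan is to bootstrap everything off the contrapositive of Theorem~\ref{es-n}. Since the eigenvectors of $A$ are dependent while $A$ has $n$ distinct eigenvalues and a coefficient-wise tangible characteristic polynomial, $A$ must \emph{violate} the difference criterion; hence, with $\Ind_k$ the index set of $\alpha_k$ as in the proof of Theorem~\ref{es-n}, the inclusion chain $\Ind_1\subseteq\Ind_2\subseteq\cdots$ breaks. Thus there is a least $k$ with $\Ind_k\not\subseteq\Ind_{k+1}$, a dropped index $t\in\Ind_k\setminus\Ind_{k+1}$ that reappears later (as with $t=2$, $\Ind_2=\{1,2\}\not\subseteq\{1,3,4\}=\Ind_3$ in Example~\ref{exa}), so $t\in I_{\lambda}\cap I_{\mu}$ for two distinct corner roots. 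I would first record precisely which coefficients then admit two distinct dominant index sets; this ``repeated maximum'' is the common source of the three assertions.

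For part~(1) I would establish a supertropical Jacobi identity for the principal minors of the adjoint: for $|I|=k$,
$$\det\big((\adj A)_{I,I}\big)\ \models_{gs}\ \det(A)^{\,k-1}\,\det\big(A_{I^{c},I^{c}}\big),$$
proved in the spirit of Proposition~\ref{adjgs} and Theorem~\ref{detadj} (or via the transfer principle). Dividing by $\det(A)^{k}$ and summing over all $k$-subsets yields $\det(A)\,\alpha'_{k}\models_{gs}\alpha_{n-k}$ coefficient-by-coefficient, i.e.\ recovers Theorem~\ref{CPIP}(2). This pins the $\nu$-value of $\alpha'_{k}$; to see $\alpha'_{k}$ is an actual \emph{ghost} I would expand $\alpha'_{k}=\det(A)^{-k}\sum_{|I|=k}\det\big((\adj A)_{I,I}\big)$ into permutations of $A$ and exhibit two distinct ones attaining that $\nu$-value, the coincidence being forced by the reappearing index of the broken $\Ind$-chain (exactly the ``systematic singularity'' identity $\lambda_1^{3}(\lambda_1\lambda_2^{2})(a_{4,1}a_{1,2}a_{3,4})(a_{4,1}a_{1,3}\lambda_3)=\cdots$ displayed in Example~\ref{exa}). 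Since $\alpha_{n-k}$ and $\det(A)$ are tangible, $\det(A)\,\alpha'_{k}\in\mathcal G$ stands against a tangible coefficient of $x^{n}f_A(x^{-1})$, giving the strict surpassing of~(1).

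Part~(2) is then nearly formal: under the hypothesis $f_{A^{\nabla}}\neq f^{es}_{A^{\nabla}}$ that ghost coefficient renders a monomial inessential, deleting at least one monomial from $f^{es}_{A^{\nabla}}$ and hence one corner root; by Theorem~\ref{CPIP}(2) every corner eigenvalue of $A^{\nabla}$ still inverts into one of the $n$ eigenvalues of $A$ (all corner, as $f_A$ is tangible and essential), and no new non-corner eigenvalue can compensate because the deleted monomial is a genuine gap between two surviving essential ones; so $A^{\nabla}$ has strictly fewer than $n$ distinct eigenvalues. For part~(3) I would run the eigenvector algorithm (Theorem~\ref{alg}) on $A^{\nabla}$, writing the eigenmatrix as $A^{\nabla}+\mu I=\tfrac{1}{\det(A)}\big(\adj(A)+\mu\det(A)\,I\big)$ and using Lemma~\ref{invnab}, and show that the index sets $I_{\mu}$ of the surviving eigenvalues of $A^{\nabla}$ are pairwise disjoint --- morally because the coefficient whose index set overlapped is precisely the one that has been turned ghost/inessential --- so $A^{\nabla}$ satisfies the difference criterion relative to its own eigenvalues; independence then follows by rerunning the three-step argument of Theorem~\ref{es-n}, upgraded to allow fewer than $n$ distinct (and possibly non-corner) eigenvalues.

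The main obstacle is twofold. First, one must secure the $\models_{gs}$-form of the Jacobi identity and, more delicately, track the ghost layer through the division and summation so that the repeated maximum survives without accidental tangible cancellation --- this is where ``$n$ distinct eigenvalues'' is essential, as it rigidifies the $\Ind_k$. Second, and harder, is part~(3): proving disjointness of the new index sets requires controlling how the adjoint reorganizes dominant permutations, and the needed generalization of Theorem~\ref{es-n} past the fully-tangible case is genuinely new. If a direct argument resists, the fallback is to route part~(3) through the generalized supertropical eigenspaces of \S~\ref{geneig} and Theorem~\ref{es-n1}, where passing to powers $A^{m}$ already restores the clean picture; and, since this is stated as a conjecture, a wider family of examples should be checked before committing to the general claim.
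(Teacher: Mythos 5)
What you were given is stated as a \emph{conjecture}, and the paper never proves it: after stating Conjectures~\ref{esnab1} and~\ref{esnab2}, the authors only verify Conjecture~\ref{esnab1} directly on Example~\ref{exa}, computing $\adj(A)$, its characteristic polynomial, identifying which monomial becomes inessential (the $47x^2$ term), re-reading the index sets $I_{\lambda}$ from $f^{es}_{\adj(A)}$ rather than from the raw $\Ind_k$, and then checking by hand that the resulting eigenvectors of $A^\nabla$ are independent. There is therefore no proof in the paper to compare your proposal against; you are proposing a strategy for something the authors leave open.

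As a strategy your proposal is in the right spirit, and you are correctly candid about the gaps, but several of them are load-bearing. You invoke the contrapositive of Theorem~\ref{es-n}, which requires $f_A$ to be coefficient-wise tangible; this is not among the conjecture's hypotheses and would need to be justified or routed around. The supertropical Jacobi identity $\det\bigl((\adj A)_{I,I}\bigr)\models_{gs}\det(A)^{|I|-1}\det\bigl(A_{I^c,I^c}\bigr)$ is plausible via the transfer principle, but summing it over $k$-subsets only recovers the coefficientwise $\models_{gs}$ inequality already recorded as Theorem~\ref{CPIP}(2); the substantive content of part~(1) is exhibiting \emph{two} distinct dominant permutations in the expansion of the adjoint coefficient, forced by the broken $\Ind$-chain of $A$, and that passage is precisely what is conjectural. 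For part~(2), ``a ghost coefficient makes a monomial inessential'' is not automatic --- a ghost coefficient can still sit strictly above the line joining its neighbours and remain essential (supplying a non-corner root) --- so you would need to argue that the ghosting produced in part~(1) lands below that line, which again is the open combinatorial point. For part~(3) you would need a version of Theorem~\ref{es-n} that tolerates ghost coefficients, repeated eigenvalues, and non-corner roots, none of which the paper supplies; and your fallback to \S~\ref{geneig} gives, via Theorem~\ref{es-n2}, independence of the generalized eigenspaces $V'_{\lambda_i}$ of $A$ after passing to $A^{n!}$, not independence of the honest eigenvectors of $A^\nabla$, so it does not close part~(3). In short, the scaffolding is reasonable, but each numbered claim still needs a genuinely new argument --- consistent with the fact that the authors chose to state this as a conjecture rather than a theorem.
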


\begin{con}\label{esnab2} Let $A$ be a nonsingular matrix.
If $A^\nabla$ has $n$ distinct eigenvalues, then their corresponding eigenvectors are independent.
\end{con}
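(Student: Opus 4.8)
The plan is to reduce Conjecture~\ref{esnab2} to Theorem~\ref{es-n} by showing that, whenever $A^\nabla$ has $n$ distinct eigenvalues, $A^\nabla$ automatically satisfies the difference criterion. The point of departure is Theorem~\ref{CPIP}(2): $\det(A) f_{A^\nabla}(x) \models_{gs} x^n f_A(x^{-1})$, which forces the corner eigenvalues of $A^\nabla$ to be among the reciprocals of the corner eigenvalues of $A$. Since $A^\nabla$ has $n$ distinct tangible eigenvalues $\mu_1 > \dots > \mu_n$, the characteristic polynomial $f_{A^\nabla}$ must equal its essential polynomial $f_{A^\nabla}^{es}$, so all its coefficients $\alpha_k(A^\nabla)$ are tangible and the chain of inequalities analogous to~\eqref{n-2} holds. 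First I would record that in this situation Step~(1) of the proof of Theorem~\ref{es-n} applies verbatim: from the strict inequalities $\mu_1 > \mu_2 > \dots > \mu_n$ one gets $\Ind_k(A^\nabla) \subseteq \Ind_{k+1}(A^\nabla)$ for every $k$, hence $I_{\mu_k} = \{j_k\}$ for distinct indices $j_1, \dots, j_n$, which are therefore pairwise disjoint singletons. That is exactly the difference criterion for $A^\nabla$.

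Once the difference criterion is established for $A^\nabla$, the rest is immediate: apply Theorem~\ref{es-n} with $A^\nabla$ in place of $A$ (it is nonsingular by Lemma~\ref{invnab}-type considerations, or directly since $\det(A^\nabla)$ is invertible when $\det(A)$ is, and it has tangible characteristic polynomial coefficient-wise and $n$ distinct eigenvalues by hypothesis), and conclude that the eigenvectors of $A^\nabla$ are tropically independent. So the skeleton is: (i) distinctness of the $n$ eigenvalues of $A^\nabla$ $\Rightarrow$ $f_{A^\nabla} = f_{A^\nabla}^{es}$ and all coefficients tangible; (ii) the resulting strict chain $\mu_1 > \dots > \mu_n$ gives, by the Step~(1) argument, nested index sets $\Ind_k(A^\nabla)$ and hence disjoint $I_{\mu_k}$; (iii) the difference criterion holds, so Theorem~\ref{es-n} finishes the proof.

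The main obstacle I anticipate is step (i)--(ii), specifically justifying that $n$ distinct \emph{eigenvalues} of $A^\nabla$ really does force $f_{A^\nabla}$ to be fully essential with tangible coefficients. A priori some coefficient $\alpha_k(A^\nabla)$ could be a ghost, or a monomial could be inessential, and one must rule this out exactly as in the opening paragraph of the proof of Theorem~\ref{es-n}: if the chain~\eqref{n-2} failed for $A^\nabla$ there would be a pair $t,s$ with $f_{A^\nabla}(\mu_t)$ tangible or $\mu_t = \mu_s$, contradicting that there are $n$ distinct (tangible) roots. The subtlety is that Theorem~\ref{es-n} as stated \emph{assumes} the tangibility of the characteristic polynomial, so one should check that ``$n$ distinct eigenvalues'' genuinely delivers this hypothesis for $A^\nabla$; if it does not in full generality, the honest statement of the conjecture would need that tangibility hypothesis added, and the proof would proceed as above. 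A secondary point worth a sentence is that the reciprocal relabelling from Theorem~\ref{CPIP}(2) is order-reversing, so the ``$k$-th corner root'' bookkeeping for $A^\nabla$ is consistent with the increasing/decreasing conventions used in Theorem~\ref{es-n}; this is routine but should be stated so the reader sees why Step~(1)'s argument transfers without change.
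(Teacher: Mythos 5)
The central step of your argument is false, and the paper itself contains the counterexample. You claim that the strict chain $\mu_1>\mu_2>\dots>\mu_n$ (i.e.\ $n$ distinct eigenvalues of $A^\nabla$) already yields $\Ind_k(A^\nabla)\subseteq\Ind_{k+1}(A^\nabla)$ for every $k$, ``by Step~(1) of the proof of Theorem~\ref{es-n} applied verbatim.'' But Step~(1) of that proof does not derive nesting from the chain of inequalities; it derives nesting from the \emph{difference criterion}, which is a hypothesis of Theorem~\ref{es-n}. Re-read the inductive part of Step~(1): the contradiction is obtained from ``$j_s\in I_{\lambda_t}\cap I_{\lambda_i}$\dots, contradicting the difference criterion.'' The only thing the chain~\eqref{n-2} gives unconditionally is $\Ind_1\subseteq\Ind_k$ for all $k$, i.e.\ that the index carrying the trace is common to all index sets. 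Your plan is therefore circular: you invoke a step that presupposes the very disjointness you are trying to establish.

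That the implication ``$n$ distinct eigenvalues $\Rightarrow$ nested index sets $\Rightarrow$ difference criterion'' fails in general is exactly what Example~\ref{exa} demonstrates: $A$ there has four distinct eigenvalues $10>9>8>1$, yet $\Ind_2=\{1,2\}\not\subseteq\Ind_3=\{1,3,4\}$ and $I_{\lambda_2}=I_{\lambda_4}=\{2\}$, so the difference criterion fails and the eigenvectors are dependent. If your reduction were sound, it would prove Theorem~\ref{es-n} with the difference-criterion hypothesis deleted, which that example refutes. To salvage the approach one would need an argument \emph{specific to} $A^\nabla$ --- for instance exploiting the relation $\det(A)f_{A^\nabla}(x)\models_{gs} x^n f_A(x^{-1})$ and the structure of $\adj(A)$ to show that when all coefficients $\alpha_k(A^\nabla)$ remain tangible and essential, the index sets of $A^\nabla$ automatically nest --- but you give no such argument, and the paper leaves this as Conjecture~\ref{esnab2} precisely because no such reduction is known. (Your secondary worries, that $f_{A^\nabla}=f^{es}_{A^\nabla}$ with tangible coefficients follows from $n$ distinct eigenvalues, and that the order-reversing relabelling is harmless, are fine; the fatal gap is the unjustified nesting claim in step~(ii).)
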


Let us consider Conjecture~\ref{esnab1} in the case of
Example~\ref{exa}. We recall Theorem~\ref{detadj} and
Lemma~\ref{invnab}, to conclude that $\det(\adj(A))=\det(A)^{n-1}$
is attained solely by the permutation $\sigma^{-1}$, where $\det(A)$
is attained solely by $\sigma$.

Let $A$ be as in Example~\ref{exa}. As a result
$$\adj(A)=
\left(
\begin{array}{cccc}
- & -& -& 19\\
-& 27 & -& 27\\
19 & 28 & - & 28\\
- & - & 19 & -
 \end{array}
\right),$$$$f_{\adj(A)}(x)=x^4+27x^3+47x^2+74^\nu x+84,$$$$\text{and }\Ind_1=\{2\},\ \Ind_2=\{3,4\},\ \Ind_3=\{2,4,3\}=\{2,3,4\},\ \Ind_4=\{3,1,4,2\}.$$
 (Indeed~$\det(\adj(A))=\det(A)^{4-1}$, and~$f_{A^\nabla}$ is obtained by coefficients~$\frac{\alpha_k}{\det(A)^k}$.)

It is easy to see that
$$\Ind_1\setminus\emptyset=\{2\},\ \Ind_2\setminus\Ind_1=\{3,4\},\ \Ind_3\setminus\Ind_2=\{2\},\ \Ind_4\setminus\Ind_3=\{1\}$$
are not disjoint. However, calculating the eigenvalues of~$\adj(A)$  reveals these are not the sets~$I_{\lambda_k}$. That is,~$f^{es}_{_{\adj(A)}}(x)=x^4+27x^3+74^\nu x+84,$ and the dependence in the principal sub-matrix of~$\{2,3,4\}$  (identical to the minor causing dependence  in~$W$),
$$\lambda_2(\lambda_1\lambda_2)(a_{4,1}a_{1,2}a_{3,4})(a_{4,1}a_{1,3})\lambda_3=\lambda_2(a_{4,1}a_{1,2})(a_{3,4}a_{4,1}a_{1,3})(a_{1,2}a_{2,1})\lambda_3\Rightarrow$$
 $$(\lambda_1\lambda_2)(a_{4,1}a_{1,2}a_{3,4})(a_{4,1}a_{1,3})\frac{a_{4,1}a_{1,3}a_{3,4}}{a_{1,1}}=(a_{4,1}a_{1,2})(a_{3,4}a_{4,1}a_{1,3})(a_{1,2}a_{2,1})\frac{a_{4,1}a_{1,3}a_{3,4}}{a_{1,1}},$$
increases the coefficient of~$x$,  causing~$47x^2$  to be inessential.
As a result,
$$I_{\lambda_1}=\{2\},\ I_{\lambda_{2,3}}=\{3,4\},\ I_{\lambda_4}=\{1\},$$ where $\lambda_1=27,\ \lambda_{2,3}=23.5\text{ (with multiplicity 2),}\text{ and}\ \lambda_4=10.$
As the conjecture predicted, the  eigenvectors
$$v_1=(66,81,82,74)=66(0,15,16,8),\ v_4=(74,65,55,65)=55(19,10,0,10),$$
$$\text{and } v_{2,3}=65^{-1}\underbrace{(65, 69.5, 74, 69.5)}_\text{from the third column}=(0,4.5,9,4.5)=69.5^{-1}\underbrace{(69.5,74,78.5,74)}_\text{from the fourth column}$$ are independent.

\subsubsection{The  resolution by means of generalized
eigenspaces}\label{geneig}

\textbf{Eigenspaces} are studied in ~\cite{STMA2} and are defined
in~\cite{STMA3} to be  spanned by   supertropical eigenvectors. Let
$V = F^n.$

\begin{df} A tangible vector $v\in V$ is a \textbf{generalized supertropical
eigenvector} of~$A$, with \textbf{generalized supertropical
eigenvalue} $\lambda\in \mathcal T$, if $(A + \lambda I)^m v$ is
ghost for some $m\in \mathbb{N}$. If $A  ^m v$ is itself ghost for
some $m$, we call the generalized eigenvector
 $v$ \textbf{degenerate}.

 The minimal such $m$ is
called the \textbf{multiplicity} of the eigenvalue (and also of the
eigenvector).

The \textbf{generalized supertropical eigenspace} $V_\lambda$ with
\textbf{generalized supertropical eigenvalue} $\lambda\in \mathcal T$
is the set of generalized supertropical eigenvectors with
generalized supertropical eigenvalue $\lambda$.
\end{df}

Note that if $v$ is a degenerate eigenvector, then it belongs to
$V_\lambda$ for all sufficiently small $\lambda$.

\begin{lem} $V_\lambda$ is indeed a supertropical subspace of $V$. \end{lem}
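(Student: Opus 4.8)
The plan is to reduce the lemma to a single elementary closure property. Call a vector $u\in V$ a \emph{ghost vector} if $u\models_{gs}\overrightarrow{0_{\mathcal F}}$, equivalently if every coordinate of $u$ lies in $\mathcal G\cup\{0_{\mathcal F}\}$. I would first record that ghost vectors are stable under addition and under left multiplication by any matrix $M\in\M_n(\mathcal F)$: since $\mathcal G\cup\{0_{\mathcal F}\}$ is closed under addition and under multiplication by elements of $\mathcal F$ (it is the ghost ideal together with $0_{\mathcal F}$), each coordinate $\sum_j m_{ij}u_j$ of $Mu$ is a sum of products of arbitrary scalars with elements of $\mathcal G\cup\{0_{\mathcal F}\}$, hence again lies in $\mathcal G\cup\{0_{\mathcal F}\}$; and the sum of two ghost vectors is ghost coordinatewise. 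In particular $\overrightarrow{0_{\mathcal F}}\in V_\lambda$ (take $m=1$), so $V_\lambda$ is nonempty.

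The two module axioms then follow at once. For scalar multiplication: if $v\in V_\lambda$ with $(A+\lambda I)^m v$ ghost and $a\in\mathcal F$, then $(A+\lambda I)^m(av)=a\bigl((A+\lambda I)^m v\bigr)$ is a scalar multiple of a ghost vector, hence ghost, so $av\in V_\lambda$ with the same exponent $m$. For addition: given $v,v'\in V_\lambda$ with $(A+\lambda I)^m v$ and $(A+\lambda I)^{m'}v'$ ghost, set $M=\max(m,m')\ge 0$; then $(A+\lambda I)^M v=(A+\lambda I)^{M-m}\bigl((A+\lambda I)^m v\bigr)$ and, symmetrically, $(A+\lambda I)^M v'$ are matrices applied to ghost vectors, hence ghost, so that $(A+\lambda I)^M(v+v')=(A+\lambda I)^M v+(A+\lambda I)^M v'$ is a sum of ghost vectors, hence ghost; thus $v+v'\in V_\lambda$ with exponent $M$. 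This presents $V_\lambda$ as an $\mathcal F$-submodule of $V=\mathcal F^n$, which is what ``supertropical subspace'' means.

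The only delicate point — and the only place this departs from the classical statement — is the tangibility clause in the definition of a generalized eigenvector, since $av$ and $v+v'$ above need not have tangible coordinates. This is handled exactly as for ordinary supertropical eigenspaces in~\cite{STMA2,STMA3}: one reads $V_\lambda$ as the set of \emph{all} $v\in V$ admitting some $m$ with $(A+\lambda I)^m v$ ghost (its tangible members being precisely the generalized supertropical eigenvectors), or equivalently as the $\mathcal F$-span of those eigenvectors, and the closure computations above apply verbatim to that set. No difficulty arises from the exponents $m$, since at each stage we only pass to the maximum of finitely many of them.
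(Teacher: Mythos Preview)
Your proof is correct and follows essentially the same approach as the paper's: both arguments rest on the observation that applying any matrix to a ghost vector again yields a ghost vector, and then pass to a common exponent (you use $\max(m,m')$, the paper uses $m+t$, handling $v+au$ in a single line). Your explicit discussion of the tangibility clause is a welcome addition that the paper's proof leaves implicit.
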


\begin{proof} Let $v,u\in V_\lambda.$ Thus $\exists m,t:\ (A+\lambda I)^mv\models_{gs} 0_{\mathcal F}\ $ and $\ (A+\lambda I)^tu\models_{gs} 0_{\mathcal F}$, and therefore for any~$a\in {\mathcal F}$ $$(A+\lambda I)^{(m+t)}(v+au)=(A+\lambda I)^t(A+\lambda I)^mv+a(A+\lambda I)^m(A+\lambda I)^tu\models_{gs} 0_{\mathcal F}.$$
\end{proof}

\begin{rem}\label{hierarchy}
We have the following hierarchy:
$$Av\models_{gs} \lambda v,\ \text{implies}\ A^mv \models_{gs} \lambda^mv,\ \text{implies}\ A^mv + \lambda ^m v\models_{gs}0_{\mathcal F}, \ \text{implies}\ (A
+ \lambda I)^m v\models_{gs}0_{\mathcal F}.$$

\end{rem}

This approach gives some insight into the difference criterion. For
the remainder of this paper we use the well-known digraph of a
matrix, whose vertices are the indices $\{1, \dots, n\}$ and whose
edges correspond to the nonzero entries $a_{i,j}$ of the matrix. Any
permutation $\pi$ corresponds to some    cycle of length $n$ which
can be decomposed into disjoint simple cycles, and the contribution
of the permutation to the determinant is the product of their
weights.  For any cycle of length $k$ and weight $\mu$, its $k$-th
power lies on the diagonal with all of the entries equal to $\mu$
(so that its weight is $\mu^k$).  Thus, the corresponding part of
the diagonal of $A^k$ (and all subsequent powers) dominates all
$k$-th powers cycles of length $k$, and in particular this is the
case for $A^{m} = (A^k)^{m/k},$ for any multiple $m$ of $n!$.

The
diagonal is a dominant permutation  of $A^m$.

\begin{lem}  The difference criterion is satisfied
for $A$ iff the diagonal entries of $A^m$ are distinct, whenever
$n!$ divides $m$.
\end{lem}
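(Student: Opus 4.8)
The plan is to pass to the weighted digraph $G(A)$ and compare both sides there. First I would unpack the difference criterion: combining Step~(1) of the proof of Theorem~\ref{es-n} with its (trivial) converse, for a nonsingular matrix with tangible characteristic polynomial and $n$ distinct eigenvalues the difference criterion is equivalent to the nesting $\Ind_1\subseteq\Ind_2\subseteq\cdots\subseteq\Ind_n$, equivalently to each $I_{\lambda_k}$ being a singleton; after a simultaneous permutation of rows and columns (which changes neither the criterion nor the digraph) we may take $\Ind_k=\{1,\dots,k\}$, and then \eqref{m-2} forces $\lambda_k=a_{k,k}$, so $a_{1,1}>a_{2,2}>\cdots>a_{n,n}$ are exactly the eigenvalues. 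For the right-hand side, recall the facts recorded just above the lemma: for $n!\mid m$ the diagonal of $A^m$ is a dominant permutation and $(A^m)_{i,i}$ is the maximal weight of a closed walk of length $m$ from $i$ to $i$ in $G(A)$; since every $k$-cycle has weight $\le\alpha_k\le\lambda_1^{\,k}$, the number $\lambda_1=\tr(A)$ is the maximal cycle mean, so such an optimal walk spends all but boundedly many of its $m$ steps looping a maximal-mean cycle, and $(A^m)_{i,i}=\lambda_1^{\,m}\gamma_i$ with a factor $\gamma_i\le 1_{\mathcal{T}}$ that no longer depends on $m$ once $m$ is a large multiple of $n!$. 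Thus the statement to be proved is: the $\Ind_k$ nest $\iff$ the $\gamma_i$ are pairwise distinct.

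To obtain ``nesting $\Rightarrow$ distinct'' I would compute $\gamma_i$ directly: a closed walk at $i$ that beats the self-loop must reach a heavier cycle inside the strongly connected component of $i$ and return, and the normalized weight $\gamma_i$ of the optimal such walk is controlled by the principal minors $\alpha_{|S|}$ of the vertex sets $S$ it traverses; under the difference criterion each such $\alpha_{|S|}$ equals the product of the corresponding diagonal entries of $A$, which lets one pin $\gamma_i$ to a quantity determined by where $i$ sits in the filtration $\Ind_1\subset\Ind_2\subset\cdots$, and the strict inequalities $\lambda_1>\cdots>\lambda_n$ then separate the $\gamma_i$ (in the extreme case where every component is trivial this is immediate, since then $(A^m)_{i,i}=\lambda_i^{\,m}$). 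For ``not nesting $\Rightarrow$ not distinct'' I would take the least $k$ with $\Ind_{k-1}\not\subseteq\Ind_k$; the optimal partial permutation attaining $\alpha_k=\lambda_1\cdots\lambda_k$ on $\Ind_k$ then introduces at least two indices $j\ne j'$ outside $\Ind_{k-1}$, and routing a length-$m$ closed walk at $j$ through that permutation's cycle(s) together with a detour to a maximal-mean cycle — then doing the same at $j'$, breaking out to the maximal-mean cycle at the same vertex — produces two closed walks of equal weight, whose optimality follows from the extremality of $\alpha_k$ against $\alpha_{k-1}$ and against the smaller minors; hence $\gamma_j=\gamma_{j'}$ and two diagonal entries of $A^m$ coincide.

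The main obstacle is precisely the passage between the combinatorics of closed walks in $G(A)$ and the staircase of dominant minors: one must prove that the particular closed-walk shapes above are genuinely optimal, so that $\gamma_i$ really is the invariant one reads off from $\Ind_1\subset\cdots\subset\Ind_n$, and for this one needs not merely the identities $\alpha_k=\lambda_1\cdots\lambda_k$ but the strict log-concavity of $(\alpha_k)_k$ coming from the distinctness of the eigenvalues. The most delicate point lies in the ``nesting $\Rightarrow$ distinct'' direction, where one has to exclude accidental coincidences between the detour-corrected weights $\gamma_i$ and $\gamma_{i'}$ for indices $i,i'$ entering the filtration at different stages — the naive guess that $\gamma_i$ depends only on that stage (via $\alpha_{k(i)}/\lambda_1^{\,k(i)}$) already fails once the digraph of $A$ is sufficiently connected, so this exclusion must use the finer interaction between the off-diagonal entries of $A$ and the cycle structure of its strongly connected components.
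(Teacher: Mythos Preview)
Your reduction hinges on writing $(A^m)_{i,i}=\lambda_1^{\,m}\gamma_i$ with $\gamma_i$ eventually independent of $m$, but this fails whenever vertex $i$ has no closed walk through the loop at vertex~$1$: already for a diagonal matrix $(A^m)_{i,i}=a_{i,i}^{\,m}$, so your ``constant'' $\gamma_i$ would have to be $(a_{i,i}/\lambda_1)^m$. The ``extreme case where every component is trivial'' that you set aside is thus not a harmless edge case but a counterexample to the normalization itself, and the problem does not reduce to ``the $\gamma_i$ are pairwise distinct''. Worse, take $a_{1,1}=10$, $a_{2,2}=5$, $a_{3,3}=0$, $a_{1,2}=a_{2,1}=a_{1,3}=a_{3,1}=7$, $a_{2,3}=a_{3,2}=0_{\mathcal F}$. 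Here the characteristic polynomial is $x^3+10x^2+15x+19$ with tangible coefficients, the eigenvalues $10,5,4$ are distinct, and $\Ind_k=[k]$, so the difference criterion holds; yet for every $m\ge2$ the optimal closed walk at $2$ and at $3$ is the same detour $i\to1\to\cdots\to1\to i$ of weight $10m-6$, whence $(A^m)_{2,2}=(A^m)_{3,3}$. So the forward implication you flag as ``delicate'' is actually false without an extra hypothesis such as strong nonsingularity (and indeed $A^2$ is singular in this example, with rows $2$ and $3$ equal). The inequalities $\alpha_k\le\lambda_1^{\,k}$ bound cycle weights from above but impose no relation among the detour costs $a_{i,1}a_{1,i}$ that govern your $\gamma_i$, so no refinement of the walk analysis can exclude such coincidences from the difference criterion alone.

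The paper's argument is organized quite differently and does not normalize by $\lambda_1^{\,m}$ or optimize individual closed walks. It uses only the observation, recorded just before the lemma, that for $n!\mid m$ the diagonal of $A^m$ is a dominant permutation; consequently the sets $\Ind_k$ for $A^m$ are simply the positions of the $k$ largest diagonal entries, and the proof matches these directly against the $I_\lambda$ of $A$. The $(\Rightarrow)$ direction is dispatched in a single sentence, and the $(\Leftarrow)$ direction tracks a doubly-occurring index through the rearranged diagonal to produce a repeated diagonal value. Whatever one thinks of the completeness of that brief argument, your route through cycle means and $\gamma$-invariants is a different one, and the example above shows it does not reach the destination.
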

\begin{proof} $(\Rightarrow)$ The
diagonal is a dominant permutation  of $A^m$.  The difference
criterion implies that all of these diagonal entries are
distinct.

$(\Leftarrow)$ Suppose that in $A^m$ some index $i$ appears in both
$I_k$ and $I_{k'}$ for $k<k'$, where $k$ is taken minimal such. Then
all the previous $I_j$ are disjoint, so, rearranging the diagonal
entries, we may assume that $i$ appears in the $|I_1| + \dots +
|I_{k-1}| +\alpha$ position in the diagonal for some $1 \le \alpha
\le |I_{k}|.$ But $i$ must also appear in the analogous position
arising from $I_{k'},$ for some $k'>k$, so $A^m$ has a double diagonal entry.
\end{proof}

\begin{lem}\label{es-n1} If $A$ is nonsingular and diagonally dominant, then the diagonal of $A$ is tangible.
\end{lem}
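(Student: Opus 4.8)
The plan is to read off both conclusions from the shape of the determinant. First I would unwind the hypotheses. Since $A$ is diagonally dominant, by definition the identity permutation is a dominant permutation of $A$, i.e.\ $a_{1,1}a_{2,2}\cdots a_{n,n}\cong_\nu \det(A)$. Since $A$ is nonsingular there is only one dominant permutation, so the diagonal is \emph{the} dominant permutation, and — using the remark recorded just after Definition~\ref{pt} that a single dominant permutation has weight equal to the determinant — we get the honest equality in $R$
$$\det(A)=a_{1,1}a_{2,2}\cdots a_{n,n}.$$
Nonsingularity of $A$ also gives $\det(A)\in\mathcal T$.

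The second (and only substantive) step is the elementary observation that a product of elements of $R$ can be tangible only if every factor is tangible. Indeed $\mathcal T$ is a multiplicative group, hence closed under products, whereas $\mathcal G\cup\{0_R\}$ is an ideal of $R$; so if some diagonal entry $a_{i,i}$ were ghost or equal to $0_R$, then $a_{1,1}\cdots a_{n,n}$ would lie in $\mathcal G\cup\{0_R\}$, contradicting $\det(A)\in\mathcal T$. Hence each $a_{i,i}\in\mathcal T$, i.e.\ the diagonal of $A$ is tangible.

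I do not anticipate a genuine obstacle: the statement is essentially a bookkeeping consequence of the definitions of ``diagonally dominant'' and ``nonsingular'' together with the ideal property of the ghosts. The one point that deserves care is insisting on the exact equality $\det(A)=\prod_i a_{i,i}$ in $R$ rather than mere $\nu$-equivalence, since it is this tangible value — not just its $\nu$-class — that is fed into the ideal argument; the single-dominant-permutation remark is exactly what supplies it. It is also worth noting that the hypothesis must be read as (supertropical) nonsingularity, $\det(A)\in\mathcal T$: with only tropical nonsingularity (uniqueness of the dominant permutation, without tangibility of its weight) the statement is false, as the $1\times 1$ matrix with a single ghost entry shows.
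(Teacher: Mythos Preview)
Your argument is correct and is exactly the paper's proof spelled out in detail: the paper's proof is the single sentence ``The determinant is the product of the diagonal entries, so each is tangible,'' which is precisely your chain (unique dominant permutation $\Rightarrow$ $\det(A)=\prod_i a_{i,i}\in\mathcal T$ $\Rightarrow$ each $a_{i,i}\in\mathcal T$ via the ideal property of $\mathcal G\cup\{0_R\}$). Your care in distinguishing equality from $\nu$-equivalence and your remark on the meaning of ``nonsingular'' are well taken but not needed beyond what you wrote.
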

\begin{proof} The determinant is the product of the diagonal
entries, so each is tangible.
\end{proof}
 In view of Remark~\ref{eigen1}, we can refine the generalized
supertropical eigenspaces $V_\lambda$. Write $f_A= \prod _i g_i$
where  $g_i = (x + \lambda_i)^{t_i},$ with the $\lambda _i$
distinct, and let $\tilde f_i = \prod _{j\ne i} g_j.$ (Thus, $f _A =
g_i \tilde f_i.$) Suppose $v \in \tilde f_i(A)V_\lambda.$ Then
$g_i(A) v \in f_A(A) V _\lambda$ is ghost, implying $ v \in
V_\lambda$. Thus, we  can define the subspace
$$V'_{\lambda_i} = \left(\prod _{j\ne i} g_j(A)\right) V ,$$
which is a generalized supertropical eigenspace  with respect to
$\lambda_i$.

 \begin{df} A matrix $A$ is \textbf{strongly nonsingular} if  $A^m$  is nonsingular for all~$m$.\end{df}

\begin{lem}\label{nonon} A strongly nonsingular matrix $A$ has no nonzero degenerate generalized
eigenvectors.\end{lem}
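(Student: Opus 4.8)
The plan is to argue by contradiction. Suppose $A$ is strongly nonsingular but possesses a nonzero degenerate generalized eigenvector $v$; by definition this means there is some $m\in\mathbb{N}$ with $A^m v$ ghost, i.e.\ $A^m v \models_{gs} \overrightarrow{0_{\mathcal F}}$. Passing to a larger power if necessary (which is harmless since $A^{m'} v = A^{m'-m}(A^m v)$ is a ghost multiple once $A^m v$ is ghost, using the second property of $\models_{gs}$), I may assume $m$ is as convenient as I like — in particular a multiple of $n!$, so that $A^m$ is diagonally dominant with tangible diagonal by the discussion preceding Lemma~\ref{es-n1}.

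First I would set $B = A^m$, which is nonsingular by strong nonsingularity, and record that $v$ is tangible (it is a generalized eigenvector, hence tangible by definition) and nonzero, so some coordinate $v_i\in\mathcal T$. The key point is that $B v \models_{gs} \overrightarrow{0_{\mathcal F}}$ forces, in the $i$-th coordinate, $\sum_{k} b_{i,k} v_k \in \mathcal G \cup \{0_{\mathcal F}\}$. I would then invoke the dependence criterion quoted after the definition of supertropical dependence: vectors $v_1,\dots,v_n\in\mathcal F^n$ are dependent iff the determinant of the matrix having them as columns lies in $\mathcal G\cup\{0_{\mathcal F}\}$. Applying this to $B$ itself — whose columns, if $Bv$ is ghost with $v$ tangible nonzero, must be supertropically dependent — I would conclude $\det(B)\in\mathcal G\cup\{0_{\mathcal F}\}$, i.e.\ $B = A^m$ is singular, contradicting strong nonsingularity. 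The cleanest route is: $Bv\models_{gs}\overrightarrow{0_{\mathcal F}}$ with $v$ tangible means the columns $\{$col$_k(B)\}$ are dependent (take $a_k = v_k$, padding with a tangible representative where $v_k = 0_{\mathcal F}$ is impossible since then that column drops out — one must be slightly careful here), hence $\det(B)\models_{gs} 0_{\mathcal F}$, so $A^m$ is singular.

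The main obstacle I anticipate is the bookkeeping around coordinates of $v$ that equal $0_{\mathcal F}$: the dependence criterion as stated applies to $n$ vectors with $n$ tangible scalars, whereas some $v_k$ may be $0_{\mathcal F}$. The fix is to note that if $v$ has a zero coordinate, then $Bv\models_{gs}\overrightarrow{0_{\mathcal F}}$ already exhibits a dependence among the columns of $B$ indexed by the support of $v$ (a proper subset), so those fewer-than-$n$ columns of $B$ are dependent; but a nonsingular matrix cannot have a dependent set of columns of any size — alternatively, one argues directly that $\det(B) = \det(A^m) \models_{gs}\det(A)^m \in\mathcal T$ by Theorem~\ref{det} and the fourth property of $\models_{gs}$, so $A^m$ is nonsingular, hence no tangible $v$ with $A^m v$ ghost can exist, because $A^m$ nonsingular means its columns are independent (again by \cite[Theorem~6.5]{STMA}), contradicting $A^m v \models_{gs}\overrightarrow{0_{\mathcal F}}$ for tangible $v\ne\overrightarrow{0_{\mathcal F}}$. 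This last phrasing sidesteps the padding issue entirely: independence of the columns of a nonsingular matrix directly rules out any tangible nonzero $v$ in the ``kernel up to ghost,'' which is exactly what a degenerate generalized eigenvector would provide. I would write the proof in this form, so the only real content is ``$A$ strongly nonsingular $\Rightarrow A^m$ nonsingular $\Rightarrow$ columns of $A^m$ independent $\Rightarrow$ no tangible $v$ with $A^m v$ ghost.''
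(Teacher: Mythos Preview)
Your approach is correct and takes a genuinely different route from the paper. You invoke the equivalence ``$\det(B)\in\mathcal T$ $\Leftrightarrow$ columns of $B$ independent'' (\cite[Theorem~6.5]{STMA}): since $A$ is strongly nonsingular, $A^m$ is nonsingular, so its columns are independent, and no tangible nonzero $v$ can make $A^m v$ ghost. The paper instead argues combinatorially: it passes to $m=n!$ so that $A^m$ is diagonally dominant with tangible diagonal (Lemma~\ref{es-n1}), then from each ghost coordinate $(A^m v)_i$ extracts an off-diagonal index $f(i)$ with $a_{i,f(i)}v_{f(i)}\ge a_{i,i}v_i$; iterating $f$ produces a cycle whose weight rivals the diagonal, contradicting the uniqueness of the dominant permutation. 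Your route is shorter and needs neither the diagonal dominance of high powers nor the cycle-chasing; the paper's argument, by contrast, is self-contained at the level of entries and does not appeal to the column-independence theorem. Two small points on your write-up: the parenthetical invoking Theorem~\ref{det} to deduce $A^m$ nonsingular is backwards --- $\det(A^m)\models_{gs}\det(A)^m$ does \emph{not} force tangibility of the left-hand side --- though this is harmless since strong nonsingularity is already the hypothesis; and the zero-coordinate issue you flag (the dependence criterion as quoted requires all $n$ coefficients in $\mathcal T$) is real but routine to close, e.g.\ by padding the missing coefficients with sufficiently small tangibles in the ordered group $\mathcal T$, or by citing that any column subset of a nonsingular matrix remains independent.
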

\begin{proof}
Take $m$ large enough (say $n!$) such that $A^m $ is dominated by
the diagonal. Write $A^m = (a_{i,j})$ and $v = (v_1, \dots, v_n)$.
Then we have a contradiction to $A^m v\in (\mathcal G\cup
\{0_{\mathcal F}\})^n$ unless for each $i$ there is $i' = f(i)$ such
that $a_{i,i'}v_{i'} \ge a_{i,i}v_i.$  Write $f^1 = f$ and $f^k =
f(f^{k-1}),$ and $a_k = a_{f^k(i), f^{k-1}(i)}.$ Then $f^k(i) =
f^{k+t}(i)$ for  $t\le n$, and $a_{k+t}\dots a_t \ge 1,$
contradicting $A^{n!}$ nonsingular (since the dominant path is on
the diagonal).
\end{proof}

\begin{thm}\label{es-n2} If $A$ is strongly nonsingular, then the $V_{\lambda_i}'$ are independent
\end{thm}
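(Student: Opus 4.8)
The plan is to reduce the independence of the refined generalized eigenspaces $V'_{\lambda_i}$ to a statement about the diagonal dominance of a sufficiently high power $A^m$, using the machinery built up in Lemmas \ref{es-n1} and \ref{nonon}. First I would fix $m$ a large multiple of $n!$, so that (by the discussion preceding Lemma \ref{nonon}) the diagonal is a dominant permutation of $A^m$, hence by strong nonsingularity the diagonal entries of $A^m$ are tangible and, after conjugating by a generalized permutation matrix, we may assume $A^m$ is a quasi diagonally dominant matrix whose diagonal entries are the $\lambda_i^m$ (with appropriate multiplicities $t_i$). Passing to $A^m$ is harmless for the independence question because, by Proposition \ref{pev} and Remark \ref{hierarchy}, every generalized eigenvector of $A$ with eigenvalue $\lambda_i$ is a generalized eigenvector of $A^m$ with eigenvalue $\lambda_i^m$, and $f_{A^m}$ factors with the $\lambda_i^m$ still distinct (distinctness of the $\lambda_i$ is preserved under the $m$-th power on $\mathcal T$).

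Next I would analyze the structure of a tangible vector $v \in V'_{\lambda_i} = \left(\prod_{j\ne i} g_j(A)\right)V$. The key point is that, because $A^m$ is dominated by its diagonal, $g_j(A^m)$ acting on any vector essentially multiplies the coordinates indexed by the "block" of $j$ by powers of $\lambda_j^m$, while the block of $i$ is multiplied by a product of the $(\lambda_j^m - \lambda_i^m)$-type factors which, tropically, is just $\prod_{j\ne i}\lambda_{\max}^{\text{stuff}}$ and in particular keeps those coordinates comparatively large. Concretely, I expect to show that for $v \in V'_{\lambda_i}$, tangible and nonzero, the coordinates of $v$ lying in the $i$-block dominate in the sense needed: when one forms the matrix $W$ whose columns are chosen tangible representatives $v^{(i)} \in V'_{\lambda_i}$, the weight of the identity permutation (picking the $i$-block-diagonal contributions) strictly dominates every other permutation. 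This is the same three-step template as in the proof of Theorem \ref{es-n}: (1) locate where each $V'_{\lambda_i}$ "sits" on the diagonal, using that $A^m$ has distinct diagonal entries $\lambda_i^m$ grouped in blocks; (2) compute the diagonal entries $W_{k,k}$ via the adjoint/Cramer identity \eqref{attain} applied to $A^m$; (3) show all off-identity permutations are strictly dominated, using that any nontrivial cycle picks up a factor of the form $\lambda_1^m\cdots\lambda_d^m$ which is strictly beaten by the corresponding diagonal product, exactly as in Step (3) of Theorem \ref{es-n}.

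The main obstacle I anticipate is Step (1): controlling, for a \emph{generalized} eigenvector $v$ (not an honest eigenvector), precisely which coordinates are forced to be large and which may vanish or be ghost, so that the resulting matrix $W$ really is nonsingular rather than merely having a tangible identity term among possibly-tied ghost terms. Lemma \ref{nonon} is what rescues this: strong nonsingularity forbids degenerate generalized eigenvectors, so no $v \in V'_{\lambda_i}$ can have its $i$-block coordinates all ghost or zero — there is always a genuine tangible "backbone" on the $i$-block, and it is this backbone that supplies the strictly dominant identity term in $\det(W)$. Assembling this carefully, together with the domination bookkeeping inherited from Theorem \ref{es-n}, yields $\det(W) \in \mathcal T$, i.e.\ the $V'_{\lambda_i}$ are independent.
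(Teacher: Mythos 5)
Your plan diverges from the paper's argument in a way that leaves a genuine gap. The paper does \emph{not} replicate the $\det(W)$ computation of Theorem~\ref{es-n}; instead, after replacing $A$ by $A^{n!}$, it assumes a ghost dependence $\sum_{i}\gamma_i\tilde f_i(A)v_i$, argues that a single dominant term $\gamma_u g_u(A)\tilde f_u(A)v_u$ must then be ghost, and concludes that some power of $A$ ghost-annihilates the nonzero tangible vector $\tilde f_u(A)v_u$ --- which contradicts strong nonsingularity directly via Lemma~\ref{nonon}. The conceptual weight is carried entirely by that lemma, not by an explicit determinant bound.

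The structural problem with your template is that the criterion ``$\det(W)\in\mathcal T$ iff the columns are independent'' (cited from \cite{STMA}) applies to $n$ vectors in $\mathcal F^n$, i.e.\ to a \emph{square} $W$. Theorem~\ref{es-n}, where you borrowed the three-step scheme, explicitly assumes $n$ distinct eigenvalues, so that there are $n$ columns. Theorem~\ref{es-n2} has no such hypothesis: there may be only $u<n$ distinct $\lambda_i$, hence only $u$ subspaces $V'_{\lambda_i}$, and by the paper's definition of dependence of subspaces one chooses \emph{one} tangible $v_i$ per subspace. Then $W$ is $n\times u$ and has no determinant, so steps (2) and (3) of your plan do not even type-check, and you give no replacement (e.g.\ a $u\times u$ minor criterion or a direct estimate of $\sum_i\gamma_i v_i$). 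A secondary issue: the clean formula $w_{k,k}=\lambda_1\cdots\lambda_{k-1}\lambda_k^{n-k}$ in Theorem~\ref{es-n} came from identifying the eigenvector with a specific column of $\adj(A+\lambda_k I)$; for $V'_{\lambda_i}=\tilde f_i(A)V$, where you only control $v_i$ up to the image of $\tilde f_i(A)$, no such closed form is available, and your ``block backbone'' heuristic would need to be made precise before any cycle-domination bookkeeping could start. The paper's contradiction argument avoids both difficulties, which is precisely why it is so short.
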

\begin{proof}
We can replace $A$ by $A^{n!}$ and assume that $A$ is diagonally
dominant and that $V_{\lambda_i} '$ are eigenspaces of $A$. We use
the notation following Lemma~\ref{es-n1}.

 We assume on the contrary that we have a ghost dependence,
i.e., $\sum _{i\in[u]} \gamma _i \tilde f_i(A) v_i $ ghost for
tangible $\gamma_i$, and aim for a contradiction. Since $A$ is
strongly nonsingular, the $g_j$ act like scalar multiplication by
$\lambda_j,$ in view of Lemma~\ref{nonon}, and, furthermore,
$\lambda_u ^{t_u}$ dominates all $ \lambda _u^{t_u-j}\beta ^j$, for
all $\beta < \lambda_u.$ Hence, when $x$ is to be specialized to
these $\beta,$ $\lambda_u ^{t_u}$ dominates $\sum _j \lambda
_u^{t_u-j}x _u^j = g_u$, and thus, by the argument of
Lemma~\ref{nonon},
 some component
of $\gamma _u \lambda_u ^{t_u}\tilde f_u(A) v_u$ is dominant in
$\gamma _u g_u(A)\tilde f_u(A) v_u$, a ghost. Therefore  some power
of $A$ ghost
 annihilates $ \tilde f_u(A) v_u = (\prod _{u'\ne u} \lambda_{u'})v_u$,  contradicting  $A$ being strongly
nonsingular.
\end{proof}

\end{document}